\numberwithin{equation}{section}
\newcommand{\R}{ {\mathbb R} }
\newcommand{\ep}{\epsilon}
\newcommand{\Om}{\Omega}
\newcommand{\subf}[2]{%
  {\small\begin{tabular}[t]{@{}c@{}}
  #1\\#2
  \end{tabular}}%
}
\newcommand{\n}{\noindent}
\newcommand{\norm}[1]{\left \| #1 \right \|}
\newcommand{\dpair}[2]{\left \langle #1 \mid #2 \right \rangle}
\newcommand{\dv}[1]{{\rm div} \ #1}
\newcommand{\vc}[1]{{\pmb #1}}
\newcommand{\newboxsymbol}[2]{
\begin{tikzpicture}
\filldraw[fill=#1,draw=#2] circle (5pt);
\end{tikzpicture}
}
\newtheorem{thm}{\pmb{Theorem}}[subsection]  
\newtheorem{deff}{\pmb{Definition}}[subsection]
\newtheorem{remark}{Remark}
\begin{document}

\title{Design of multi-layer materials using inverse homogenization and a level set method}

\author{Grigor Nika}
\address{lms, cnrs, \'ecole polytechnique, universit\'e paris-saclay \\ 91128 Palaiseau, France.}
\email{grigor.nika@polytechnique.edu}

\author{Andrei Constantinescu}
\address{lms, cnrs, \'ecole polytechnique, universit\'e paris-saclay \\ 91128 Palaiseau, France.}
\email{andrei.constantinescu@polytechnique.edu}

\date{\today}
\keywords{Topology optimization, Level set method, Inverse homogenization, Multi-layer material}

\begin{abstract}
This work is concerned with the micro-architecture of multi-layer material that globally exhibits desired mechanical properties, for instance a negative apparent Poisson ratio. We use inverse homogenization, the level set method, and the shape derivative in the sense of Hadamard to identify material regions and track boundary changes within the context of the smoothed interface. The level set method and the shape derivative obtained in the smoothed interface context allows to capture, within the unit cell, the optimal micro-geometry. We test the algorithm by computing several multi-layer auxetic micro-structures. The multi-layer approach has the added benefit that contact during movement of adjacent ``branches" of the micro-structure can be avoided in order to increase its capacity to withstand larger stresses.  
\end{abstract}

\maketitle

\section{Introduction}
The better understanding of the behavior of novel materials with unusual mechanical properties is important in many applications. As it is well known the optimization of the topology and geometry of a structure will greatly impact its performance. Topology optimization, in particular, has found many uses in the aerospace industry, automotive industry, acoustic devices to name a few. As one of the most demanding undertakings in structural design, topology optimization, has undergone a tremendous growth over the last thirty years. Generally speaking, topology optimization of continuum structures has branched out in two directions. One is structural optimization of macroscopic designs, where methods like the  Solid Isotropic Method with Penalization (SIMP) \cite{BS04} and the homogenization method \cite{AllHom}, \cite{ABFJ97} where first introduced. The other branch deals with optimization of micro-structures in order to elicit a certain macroscopic response or behavior of the resulting composite structure \cite{BK88}, \cite{GM14}, \cite{Sig94}, \cite{WMW04}. The latter will be the focal point of the current work. 

In the context of linear elastic material and small deformation kinematics there is quite a body of work in the design of mechanical meta-materials using inverse homogenization. One of the first works in the aforementioned subject was carried out by \cite{Sig94}. The author used a modified optimality criteria method that was proposed in \cite{RZ93} to optimize a periodic micro-structure so that the homogenized coefficients attained certain target values.

On the same wavelength the authors in \cite{WMW04} used inverse homogenization and a level set method coupled with the Hadamard boundary variation technique \cite{AllCon}, \cite{AJT} to construct elastic and thermo-elastic periodic micro-structures that exhibited certain prescribed macroscopic behavior for a single material and void. More recent work was also done by \cite{GM14}, where again inverse homogenization and a level set method coupled with the Hadamard shape derivative was used to extend the class of optimized micro-structures in the context of the smoothed interface approach \cite{ADDM}, \cite{GM14}. Namely, for mathematical or physical reasons a smooth, thin transitional layer of size $2\epsilon$, where $\epsilon$ is small, replaces the sharp interface between material and void or between two different material. The theory that \cite{ADDM}, \cite{GM14} develop in obtaining the shape derivative is based on the differentiability properties of the signed distance function \cite{DZ11} and it is mathematically rigorous.

Topology optimization under finite deformation has not undergone the same rapid development as in the case of small strains elasticity, for obvious reasons. One of the first works of topology optimization in non-linear elasticity appeared as part of the work of \cite{AJT} where they considered a non-linear hyper-elastic material of St. Venant-Kirchhoff type in designing a cantilever using a level set method. More recent work was carried out by the authors of \cite{WSJ14}, where they utilized the SIMP method to design non-linear periodic micro-structures using a modified St. Venant-Kirchhoff model.

The rapid advances of 3D printers have made it possible to print many of these micro-structures, that are characterized by complicated geometries, which itself has given way to testing and evaluation of the mechanical properties of such structures. For instance, the authors of \cite{Clauetal15}, 3D printed and tested a variety of the non-linear micro-structures from the work of \cite{WSJ14} and showed that the structures, similar in form as the one in {\sc figure} \ref{fig:Clauu}, exhibited an apparent Poisson ratio between $-0.8$ and $0$ for strains up to $20\%$. Preliminary experiments by P. Rousseau \cite{Rou16} on the printed structure of {\sc figure} \ref{fig:Clauu} showed that opposite branches of the structure came into contact with one another at a strain of roughly $25\%$ which matched the values reported in \cite{Clauetal15}. 
\begin{figure}[h]
\label{fig:Clauu}
\centering
\begin{tabular}{cc}
\subf{\includegraphics[width=55mm]{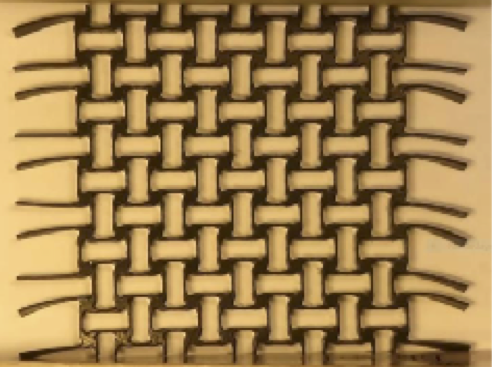}}
     {(a)}
&
\subf{\includegraphics[width=56mm]{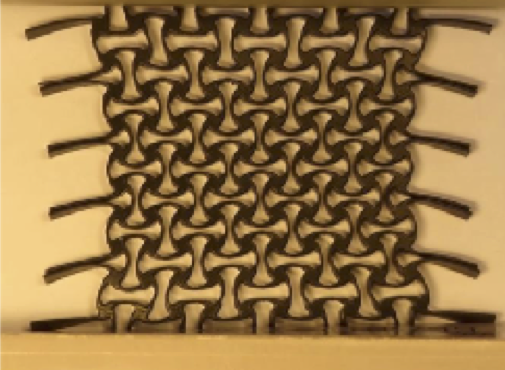}}
     {(b)}
\end{tabular}
\caption{A 3D printed material with all four branches on the same plane achieving an apparent Poisson ratio of $-0.8$ with over $20\%$ strain. On sub-figure (a) is the uncompressed image and on sub-figure (b) is the image under compression. Used with permission from \cite{Rou16}.}
\end{figure}
To go beyond the $25\%$ strain mark, the author of \cite{Rou16} designed a material where the branches were distributed over different parallel planes (see {\sc figure} \ref{fig:Rou}). The distribution of the branches on different planes eliminated contact of opposite branches up to a strain of $50\%$. A question remains whether or not the shape of the unit cell in {\sc figure} \ref{fig:Rou} is optimal. We suspect that it is not, however, the novelty of the actual problem lies in its multi-layer character within the optimization framework of a unit cell with respect to two desired apparent elastic tensors. 


\begin{figure}[h]
\centering
\begin{tabular}{cc}
\subf{\includegraphics[width=55mm]{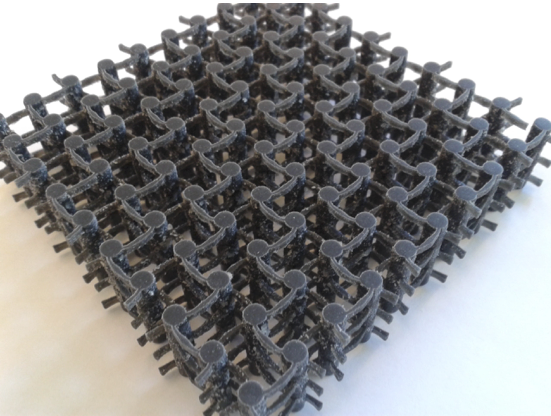}}
     {(a)}
&
\subf{\includegraphics[width=35mm]{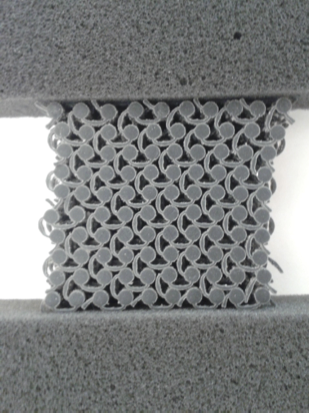}}
     {(b)}
\end{tabular}
\caption{A 3D printed material with two of the branches on a different plane achieving an apparent Poisson ratio of approximately $-1.0$ with over $40\%$ strain. Sub-figure (a) is the uncompressed image and sub-figure (b) is the image under compression. Used with permission from \cite{Rou16}.}
\label{fig:Rou}
\end{figure}

Our goal in this work is to design a multi-layer periodic composite with desired elastic properties. In other words, we need to specify the micro-structure of the material in terms of both the distribution as well as its topology. In section 2 we specify the problem setting, define our objective function that needs to be optimized and describe the notion of a Hadamard shape derivative. In section 3 we introduce the level set that is going to implicitly characterize our domain and give a brief description of the smoothed interface approach. Moreover, we compute the shape derivatives and describe the steps of the numerical algorithm. Furthermore, in Section 4 we compute several examples of multi-layer auxetic material that exhibit negative apparent Poisson ratio in 2D. For full 3D systems the steps are exactly the same, albeit with a bigger computational cost.

\n {\bf Notation}. Throughout the paper we will be employing the Einstein summation notation for repeated indices. As is the case in linear elasticity, $\vc{\varepsilon}(\vc{u})$ will indicate the strain defined by: $\vc{\varepsilon}(\vc{u}) = \frac{1}{2} \left ( \nabla \vc{u} + \nabla \vc{u}^\top \right)$, the inner product between matrices is denoted by  $\vc{A}$:$\vc{B}$ = $tr(\vc{A}^\top \vc{B}) =  A_{ij} \, B_{ji}$. Lastly, the mean value of a quantity is defined as $\mathcal{M}_Y(\gamma) = \frac{1}{|Y|}\int_Y \gamma(\vc{y}) \, d\vc{y}$.

\section{Problem setting}
We begin with a brief outline of some key results from the theory of homogenization \cite{AllHom}, \cite{BP89}, \cite{CD00}, \cite{MV10}, \cite{SP80}, that will be needed to set up the optimization problem. Consider a linear, elastic, periodic body occupying a bounded domain $\Omega$ of $\R^N, N = 2, 3$ with period $\epsilon$ that is assumed to be small in comparison to the size of the domain. Moreover, denote by $Y=\left(-\dfrac{1}{2},\dfrac{1}{2}\right)^N$ the rescaled periodic unit cell. The material properties in $\Omega$ are represented by a periodic fourth order tensor $\mathbb{A}(\vc{y})$ with $\vc{y}=\vc{x}/\epsilon \in Y$ and $\vc{x} \in \Omega$ carrying the usual symmetries and it is positive definite:
\[ 
A_{ijkl}=A_{jikl}=A_{klij} \text{ for } i,j,k,l \in \{1, \ldots, N \}
\]
\begin{figure}[h]
\begin{center}
\begin{tikzpicture}[scale=1.0]
\draw [step=0.5,thin,gray!40] (-2.6,-1.7) grid (2.6,1.7);


\draw [semithick,black] (0,0) ellipse (2.1 and 1.2);

\draw [semithick,black] (2.0,1.0) node [left] {$\Om$};



\draw[semithick,gray,fill=gray] plot [smooth cycle] coordinates {(0.25,0.3) (0.3,0.2) (0.2,0.1) (0.2,0.3)}; 
\draw[semithick,gray,fill=gray] plot [smooth cycle] coordinates {(0.75,0.3) (0.8,0.2) (0.7,0.1) (0.7,0.3)};
\draw[semithick,gray,fill=gray] plot [smooth cycle] coordinates {(1.25,0.3) (1.3,0.2) (1.2,0.1) (1.2,0.3)};

\draw[semithick,gray,fill=gray] plot [smooth cycle] coordinates {(0.25,0.8) (0.3,0.7) (0.2,0.6) (0.2,0.8)};
\draw[semithick,gray,fill=gray] plot [smooth cycle] coordinates {(0.75,0.8) (0.8,0.7) (0.7,0.6) (0.7,0.8)};

\draw[semithick,gray,fill=gray] plot [smooth cycle] coordinates {(-0.25,0.3) (-0.2,0.2) (-0.3,0.1) (-0.3,0.3)}; 
\draw[semithick,gray,fill=gray] plot [smooth cycle] coordinates {(-0.75,0.3) (-0.7,0.2) (-0.8,0.1) (-0.8,0.3)};
\draw[semithick,gray,fill=gray] plot [smooth cycle] coordinates {(-1.25,0.3) (-1.2,0.2) (-1.3,0.1) (-1.3,0.3)};

\draw[semithick,gray,fill=gray] plot [smooth cycle] coordinates {(-0.25,0.8) (-0.2,0.7) (-0.3,0.6) (-0.3,0.8)};
\draw[semithick,gray,fill=gray] plot [smooth cycle] coordinates {(-0.75,0.8) (-0.7,0.7) (-0.8,0.6) (-0.8,0.8)};

\draw[semithick,gray,fill=gray] plot [smooth cycle] coordinates {(-0.25,-0.3) (-0.2,-0.2) (-0.3,-0.1) (-0.3,-0.3)}; 
\draw[semithick,gray,fill=gray] plot [smooth cycle] coordinates {(-0.75,-0.3) (-0.7,-0.2) (-0.8,-0.1) (-0.8,-0.3)};
\draw[semithick,gray,fill=gray] plot [smooth cycle] coordinates {(-1.25,-0.3) (-1.2,-0.2) (-1.3,-0.1) (-1.3,-0.3)};

\draw[semithick,gray,fill=gray] plot [smooth cycle] coordinates {(-0.25,-0.8) (-0.2,-0.7) (-0.3,-0.6) (-0.3,-0.8)};
\draw[semithick,gray,fill=gray] plot [smooth cycle] coordinates {(-0.75,-0.8) (-0.7,-0.7) (-0.8,-0.6) (-0.8,-0.8)};

\draw[semithick,gray,fill=gray] plot [smooth cycle] coordinates {(0.25,-0.3) (0.3,-0.2) (0.2,-0.1) (0.2,-0.3)}; 
\draw[semithick,gray,fill=gray] plot [smooth cycle] coordinates {(0.75,-0.3) (0.8,-0.2) (0.7,-0.1) (0.7,-0.3)};
\draw[semithick,gray,fill=gray] plot [smooth cycle] coordinates {(1.25,-0.3) (1.3,-0.2) (1.2,-0.1) (1.2,-0.3)};

\draw[semithick,gray,fill=gray] plot [smooth cycle] coordinates {(0.25,-0.8) (0.3,-0.7) (0.2,-0.6) (0.2,-0.8)};
\draw[semithick,gray,fill=gray] plot [smooth cycle] coordinates {(0.75,-0.8) (0.8,-0.7) (0.7,-0.6) (0.7,-0.8)};

\draw [->] (1.5,0) -- (5,-1); 
\draw [->] (1.5,0.5) -- (5,2); 

\draw [semithick,lightgray] (5,-1) -- (8,-1) -- (8,2) -- (5,2) -- (5,-1);

\draw[semithick,gray,fill=gray] plot [smooth cycle] coordinates {(6.2,-0.3) (6.8,-0.1) (7,0.8) (6.3,1.2)};

\draw [semithick,black] (8.2,2.3) node [left] {$\ep \, Y$};

\draw [<->,semithick,lightgray] (8.2,-1) -- (8.2,2);
\draw [semithick,black] (8.2,0.5) node [right] {$\ep$};

\draw [<->,semithick,lightgray] (5,-1.2) -- (8,-1.2);
\draw [semithick,black] (6.5,-1.2) node [below] {$\ep$};

\end{tikzpicture}
\end{center}
\caption{Schematic of the elastic composite material that is governed by eq. \eqref{elas}. }
\label{fig:hom_schem}
\end{figure}
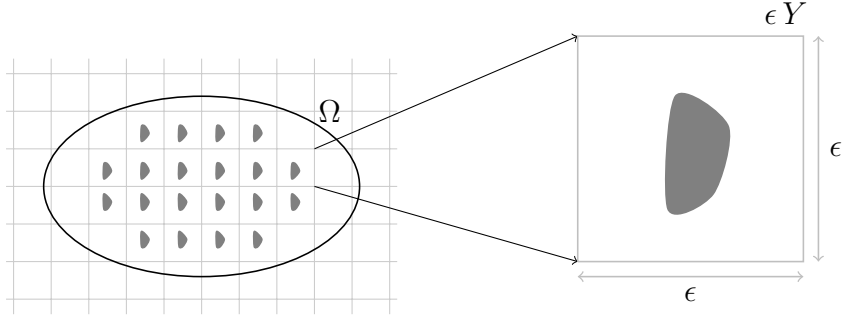

Denoting by $\vc{f}$ the body force and enforcing a homogeneous Dirichlet boundary condition the description of the problem is,
\begin{align}\label{elas}
- \dv{ \vc{\sigma}^\ep } &= \vc{f} & \text{in } &\Omega,\nonumber \\
\vc{\sigma}^\ep &= \mathbb{A}(\vc{x}/\epsilon) \, \vc{\varepsilon}(\vc{u}^\ep) & \text{in } &\Omega, \\ 
\vc{u}^\ep &= \vc{0} & \text{on } &\partial \Omega. \nonumber
\end{align} 

We perform an asymptotic analysis of $\eqref{elas}$ as the period $\ep$ approaches $0$ by searching for a displacement $\vc{u}^{\ep}$ of the form 
\[
	\vc{u}^{\ep}(\vc{x}) = \sum_{i=0}^{+\infty} \ep^i \, \vc{u}^i(\vc{x},\vc{x}/\ep)
\]

One can show that $\vc{u}^0$ depends only on $\vc{x}$ and, at order $\ep^{-1}$, we can obtain a family of auxiliary periodic boundary value problems posed on the reference cell $Y.$ To begin with, for any $m,\ell \in \{1, \ldots, N\}$ we define $\vc{E}^{m\ell}=\frac{1}{2}(\vc{e}_m \otimes \vc{e}_\ell + \vc{e}_{\ell} \otimes \vc{e}_m),$ where $(\vc{e}_k)_{1 \le k \le N}$ is the canonical basis of $\R^N.$ For each $\vc{E}^{m\ell}$ we have
\begin{align} \label{local}
-&\dv_y \left( { \mathbb{A}(\vc{y})(\vc{E}^{m\ell} + \vc{\varepsilon}_y(\vc{\chi}^{m\ell})) } \right) = \vc{0} & \text{in } Y,\nonumber \\
&\vc{y}  \mapsto \vc{\chi}^{m\ell}(\vc{y}) &Y-\text{ periodic}, \nonumber \\
&\mathcal{M}_Y (\vc{\chi}^{m\ell}) = \vc{0}. \nonumber 
\end{align} 
where $\vc{\chi}^{m\ell}$ is the displacement created by the mean deformation equal to $\vc{E}^{m\ell}.$ In its weak form the above equation looks as follows:
\begin{equation} \label{local:sol}
\text{Find } \vc{\chi}^{m\ell} \in V \text{ such that } \int_Y \mathbb{A}(\vc{y}) \, \left( \vc{E}^{m\ell} + \vc{\varepsilon}(\vc{\chi}^{m\ell}) \right) : \vc{\varepsilon}(\vc{w}) \, d\vc{y} = 0 \text{ for all } w \in V, 
\end{equation}
where $V=\{ \vc{w} \in W^{1,2}_{per}(Y;\R^N) \mid \mathcal{M}_Y(\vc{w}) = 0 \}.$ Furthermore, matching asymptotic terms at order $\ep^0$ we can obtain the homogenized equations for $\vc{u}^0,$
\begin{align}
- \dv_x{ \vc{\sigma}^0 } &= \vc{f} & \text{in } &\Omega,\nonumber \\
\vc{\sigma}^0 &= \mathbb{A}^H \, \vc{\varepsilon}(\vc{u}^0) & \text{in } &\Omega, \\ 
\vc{u}^0 &= \vc{0} & \text{on } &\partial \Omega. \nonumber
\end{align} 
where $\mathbb{A}^H$ are the homogenized coefficients and in their symmetric form look as follows,
\begin{equation}\label{hom:coef}
	A^H_{ijm\ell} = \int_{Y} \mathbb{A}(\vc{y})(\vc{E}^{ij} + \vc{\varepsilon}_y(\vc{\chi}^{ij})):(\vc{E}^{m\ell} + \vc{\varepsilon}_y(\vc{\chi}^{m\ell})) \, d\vc{y}.
\end{equation}

\subsection{The optimization problem}

Assume that $Y$ is a working domain and consider $d$ sub-domains labeled $S_1,\ldots,S_d \subset Y$ that are smooth, open, bounded subsets. Define the objective function, 
\begin{equation} \label{objective}
J(\mathbf{S}) = \frac{1}{2} \norm{\mathbb{A}^H - \mathbb{A}^t}^2_{\eta} \text{ with } \mathbf{S}=(S_1,\ldots,S_d).
\end{equation}
where $\norm{\cdot}_{\eta}$ is the weighted Euclidean norm, $\mathbb{A}^t$, written here component wise, are the specified elastic tensor values, $\mathbb{A}^H$ are the homogenized counterparts, and $\eta$ are the weight coefficients carrying the same type of symmetry as the homogenized elastic tensor. We define a set of admissible shapes contained in the working domain $Y$ that have a fixed volume by
\[
 \mathcal{U}_{ad} = \left \{ S_i \subset Y \text{is open, bounded, and smooth},\text{ such that } |S_i| = V^t_i, i=1,\ldots,d \right \}.
\] 

Thus, we can formulate the optimization problem as follows, 
\begin{gather} \label{opti:prob}
\begin{aligned}
& \inf_{\mathbf{S} \subset \mathcal{U}_{ad}}  J(\mathbf{S}) \\
& \vc{\chi}^{m\ell} \text{ satisfies } \eqref{local:sol}
\end{aligned} 
\end{gather}

\subsection{Shape propagation analysis}
In order to apply a gradient descent method to \eqref{opti:prob} we recall the notion of shape derivative. As has become standard in the shape and topology optimization literature we follow Hadamard's variation method for computing the deformation of a shape. The classical shape sensitivity framework of Hadamard provides us with a descent direction. The approach here is due to \cite{MS76} (see also \cite{AllCon}). Assume that $\Omega_0$ is a smooth, open, subset of a design domain $D.$ In the classical theory one defines the perturbation of the domain $\Omega_0$ in the direction $\vc{\theta}$ as 

\[
	(Id + \vc{\theta})(\Omega_0) := \{ \vc{x} + \vc{\theta}(\vc{x}) \mid \vc{x} \in \Omega_0 \}
\]
where $\vc{\theta} \in W^{1,\infty}(\R^N;\R^N)$ and it is tangential on the boundary of $D.$ For small enough $\vc{\theta}$, $(Id + \vc{\theta})$ is a diffeomorphism in $\R^N$. Otherwise said, every admissible shape is represented by the vector field $\vc{\theta}$. This framework allows us to define the derivative of a functional of a shape as a Fr\'echet derivative.

\begin{deff} 
The shape derivative of $J(\Omega_0)$ at $\Omega_0$ is defined as the Fr\'echet derivative in $W^{1,\infty}(\R^N;\R^N)$ at $\vc{0}$ of the mapping $\vc{\theta} \to J((Id + \vc{\theta})(\Omega_0))$:

\[
	J((Id + \vc{\theta})(\Omega_0)) = J(\Omega_0) + J'(\Omega_0)(\vc{\theta}) + o(\vc{\theta})	
\]
with $\lim_{\vc{\theta} \to \vc{0}} \frac{|o(\vc{\theta})|}{\norm{\vc{\theta}}_{W^{1,\infty}}},$ and $J'(\Omega_0)(\vc{\theta})$ a continuous linear form on $W^{1,\infty}(\R^N;\R^N).$
\end{deff}

\begin{figure}[h]
	\centering
	\includegraphics[width=2.5in]{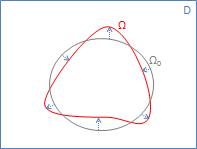}
	\caption{Perturbation of a domain in the direction $\vc{\theta}.$}
\end{figure}

\begin{remark}
The above definition is not a constructive computation for $J'(\Omega_0)(\vc{\theta}).$ There are more than one ways one can compute the shape derivative of $J(\Omega_0)$ (see \cite{AllCon} for a detailed presentation). In the following section we compute the shape derivative associated to \eqref{opti:prob} using the formal Lagrangian method of J. Cea \cite{Cea86}. 
\end{remark}

\section{Level set representation of the shape in the unit cell}

Following the ideas of \cite{ADDM}, \cite{WMW04}, the $d$ sub-domains in the cell $Y$ labeled $S_i$, $i \in \{1, \ldots, d\}$ can treat up to $2^d$ distinct phases by considering a partition of the working domain $Y$ denoted by $F_j$, $j \in \{1, \ldots, 2^d \}$ and defined the following way,

\begin{align*}
F_1 =& S_1 \cap S_2 \cap \ldots \cap S_d \\
F_2 =& \overline{S_1^c} \cap S_2 \cap \ldots \cap S_d \\
&\vdots\\
F_{2^d} =& \overline{S_1^c} \cap \overline{S_2^c} \cap \ldots \cap \overline{S_d^c} 
\end{align*}

\begin{figure}[h]
	\centering
	\includegraphics[width=2in]{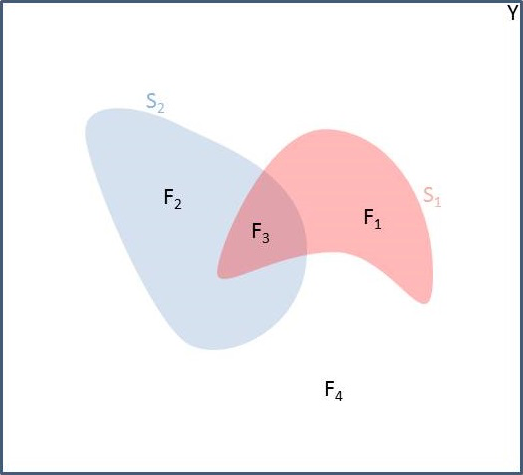}
	\caption[Representation of different phases in the unit cell for $d=2$.]{Representation of different material in the unit cell for $d=2$.}
\end{figure}

Define for $i \in \{ 1, \ldots, d \}$ the level sets $\phi_i$,


\[
 \phi_i(\vc{y}) 
	\begin{cases}
        	= 0 & \text{ if } \vc{y} \in \partial S_i \\
        	> 0 & \text{ if } \vc{y} \in S_i^c \\
	< 0 & \text{ if } \vc{y} \in S_i
        	\end{cases}
\]

Moreover, denote by $\Gamma_{km} = \Gamma_{mk} = \overline{F}_m \cap \overline{F}_k$ where $k \ne m$, the interface boundary between the $m^{th}$ and the $k^{th}$ partition and let $\Gamma = \cup_{i,j=1\\ i \ne j}^{2^d} \Gamma_{ij}$ denote the collective interface to be displaced. The properties of the material that occupy each phase, $F_j$ are characterized by an isotropic fourth order tensor

\[
 \mathbb{A}^j = 2 \, \mu_j \, I_4 + \left( \kappa_j - \frac{2\,\mu_j}{N} \right) \, I_2 \otimes I_2, \quad j \in \{ 1, \ldots, 2^d\}
\]
where $\kappa_j$ and $\mu_j$ are the bulk and shear moduli of phase $F_j$, $I_2$ is a second order identity matrix, and $I_4$ is the identity fourth order tensor acting on symmetric matrices.

\begin{remark}
Expressions of the layer $F_k$, $0 \leq k \leq 2^d$  in terms of the sub-domains $S_i$, $1 \leq k \leq d$  is simply given by the representation of the number k in basis 2. For a number, $k$ its representation in basis 2 is a sequence of d digits,  0 or 1. Replacing in position $i$ the digit $0$ with $S_i$ and $1$ with $\overline{S_i^c}$ and can map the expression in basis 2 in the expression of the layer $F_i$. In a similar way, one can express the subsequent formulas in a simple way. However for the sake of simplicity we shall restrain the expressions of the development in the paper to $d=2$ and $0 \geq j \geq 4$. 
\end{remark}

\begin{remark}
At the interface boundary between the $F_j$'s there exists a jump on the coefficients that characterize each phase. In the sub-section that follows we will change this sharp interface assumption and allow for a smooth passage from one material to the other as in \cite{ADDM}, \cite{GM14}.
\end{remark}

\subsection{The smoothed interface approach}

We model the interface as a smooth, transition, thin layer of width $2 \, \epsilon > 0$ (see \cite{ADDM}, \cite{GM14}) rather than a sharp interface. This regularization is carried out in two steps: first by re-initializing each level set, $\phi_i$ to become a signed distance function, $d_{S_i}$ to the interface boundary and then use an interpolation with a Heaviside type of function, $h_\ep(t)$, to pass from one material to the next,

\[
\phi_i \rightarrow d_{S_i} \rightarrow h_\ep(d_{S_i}).
\]

The Heaviside function $h_\epsilon(t)$ is defined as,
\begin{equation}\label{heavy}
h_{\epsilon}(t) =
\begin{cases} 
   0 & \text{if } t < -\epsilon, \\
   \frac{1}{2}\left(1+\frac{t}{\epsilon} + \frac{1}{\pi} \, \sin\left( \frac{\pi \, t}{\epsilon} \right) \right) & \text{if }  |t| \le \epsilon,\\
   1 & \text{if } t > \epsilon.
  \end{cases}
\end{equation}

\begin{remark}
The choice of the regularizing function above is not unique, it is possible to use other type of regularizing functions (see \cite{WW04}).
\end{remark}

The signed distance function to the domain $S_i, i=1,2$, denoted by $d_{S_i}$ is obtained as the stationary solution of the following problem \cite{OS88},

\begin{gather} \label{reinit}
\begin{aligned}
\frac{\partial d_{S_i}}{dt} + sign(\phi_i) (|\nabla d_{S_i}| - 1) &= 0 \text{ in } \R^+ \times Y, \\ 
d_{S_i}(0,\vc{y}) &= \phi_i (\vc{y}) \text{ in } Y,
\end{aligned} 
\end{gather}
where $\phi_i$ is the initial level set for the subset $S_i.$ Hence, the properties of the material occupying the unit cell $Y$ are then defined as a smooth interpolation between the tensors $\mathbb{A}^j$'s $j \in \{1,\ldots,2^d \}$,

\begin{align} \label{smoothing}
\mathbb{A}^{\epsilon}(d_{\mathbf{S}}) &= (1-h_\epsilon(d_{S_1})) \, (1-h_\epsilon(d_{S_2})) \, \mathbb{A}^1 + h_\epsilon(d_{S_1}) \, (1-h_\epsilon(d_{S_2})) \, \mathbb{A}^2 \nonumber \\
&+ (1-h_\epsilon(d_{S_1})) \, h_\epsilon(d_{S_2}) \, \mathbb{A}^3 + h_\epsilon(d_{S_1}) \, h_\epsilon(d_{S_2}) \, \mathbb{A}^4.
\end{align}
where $d_{\mathbf{S}}=(d_{S_1},d_{S_2})$. Lastly, we remark that the volume of each phase is written as 

\[
	\int_Y \iota_k \, d\vc{y} = V_k
\]
where $\iota_k$ is defined as follows,

\begin{equation}\label{vol:const}
\begin{cases} 
   \iota_1 &= (1-h_\epsilon(d_{S_1})) \, (1-h_\epsilon(d_{S_2})), \\
   \iota_2 &= h_\epsilon(d_{S_1}) \, (1-h_\epsilon(d_{S_2})), \\
   \iota_3 &= (1-h_\epsilon(d_{S_1})) \, h_\epsilon(d_{S_2}), \\
   \iota_4 &= h_\epsilon(d_{S_1}) \, h_\epsilon(d_{S_2}).
\end{cases}
\end{equation}

\begin{remark}
Once we have re-initialized the level sets into signed distance functions we can obtain the shape derivatives of the objective functional with respect to each sub-domain $S_i.$ In order to do this we require certain differentiability properties of the signed distance function. Detailed results pertaining to the aforementioned properties can be found in  \cite{ADDM}, \cite{GM14}. We encourage the reader to consult their work for the details. For our purposes, we will make heavy use of Propositions $2.5$ and $2.9$ in \cite{ADDM} as well as certain results therein.
\end{remark}

\begin{thm} \label{Shape:Thm}
Assume that $S_1, S_2$ are smooth, bounded, open subsets of the working domain $Y$ and $\vc{\theta^1}, \vc{\theta^2} \in W^{1,\infty}(\R^N;\R^N).$ The shape derivatives of \eqref{opti:prob} in the directions $\vc{\theta^1}, \vc{\theta^2}$ respectively are,

\begin{align*}
	\frac{\partial J}{\partial S_1}(\vc{\theta}^1) = 
&-\int_{\Gamma} \vc{\theta^1} \cdot \vc{n}^1 \Big (\eta_{ijk\ell} \, \left( A^H_{ijk\ell} - A^t_{ijk\ell} \right) A^{\epsilon*}_{mqrs}(d_{S_2}) (E^{k\ell}_{mq} + \varepsilon_{mq}(\vc{\chi^{k\ell}})) (E^{ij}_{rs} + \varepsilon_{rs}(\vc{\chi}^{ij})) \\
&- h^{*}_{\epsilon}(d_{S_2}) \Big ) d\vc{y}  
\end{align*}

\begin{align*}
	\frac{\partial J}{\partial S_2}(\vc{\theta}^2) = 
&-\int_{\Gamma} \vc{\theta^2} \cdot \vc{n}^2 \Big (\eta_{ijk\ell} \, \left( A^H_{ijk\ell} - A^t_{ijk\ell} \right) \, A^{\epsilon*}_{mqrs}(d_{S_1}) \, (E^{k\ell}_{mq} + \varepsilon_{mq}(\vc{\chi^{k\ell}})) \, (E^{ij}_{rs} + \varepsilon_{rs}(\vc{\chi}^{ij})) \\ 
&- h^{*}_{\epsilon}(d_{S_1}) \Big) d\vc{y} 
\end{align*}
where, for $i=1,2$, $\mathbb{A}^{\epsilon*}(d_{S_i})$, written component wise above, denotes,

\begin{equation} \label{A:star}
	\mathbb{A}^{\epsilon*}(d_{S_i}) = \mathbb{A}^{2} - \mathbb{A}^{1} + h_{\epsilon}(d_{S_i}) \, \left( \mathbb{A}^{1} - \mathbb{A}^{2} - \mathbb{A}^{3} + \mathbb{A}^{4} \right),
\end{equation}

\begin{equation} \label{h:star}
	h^{*}_{\epsilon}(d_{S_i}) = (\ell_2 - \ell_1+ h_{\epsilon}(d_{S_i})(\ell_1 - \ell_2 - \ell_3 + \ell_4) )
\end{equation}
and $\ell_j, j \in \{1, \ldots, 4 \}$ are the Lagrange multipliers for the weight of each phase.
\end{thm}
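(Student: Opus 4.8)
The plan is to carry out the programme announced in the remark: apply the formal Lagrangian method of Cea \cite{Cea86}, exploit the self-adjoint structure of the cell problem, and then differentiate only the explicit dependence of the smoothed tensor on the two shapes, transporting the resulting layer integrals onto $\Gamma$ by the signed-distance calculus of \cite{ADDM}. First I would set up the Lagrangian: introduce, for each index pair $m\ell$, an adjoint field $\vc{p}^{m\ell}\in V$ to relax the weak cell problem \eqref{local:sol}, and multipliers $\ell_k$ for the four volume constraints $\int_Y\iota_k\,d\vc{y}=V_k$ of \eqref{vol:const}, and with every argument regarded as independent set
\begin{align*}
\mathcal{L}(\mathbf{S},\vc{v},\vc{p},\ell) &= \tfrac12\,\norm{\mathbb{A}^H(\vc{v})-\mathbb{A}^t}^2_\eta + \sum_{k}\ell_k\Big(\int_Y\iota_k\,d\vc{y}-V_k\Big) \\
&\quad + \sum_{m\ell}\int_Y \mathbb{A}^{\epsilon}(d_{\mathbf{S}})\,\big(\vc{E}^{m\ell}+\vc{\varepsilon}(\vc{v}^{m\ell})\big):\vc{\varepsilon}(\vc{p}^{m\ell})\,d\vc{y},
\end{align*}
where in the first term $A^H_{ijk\ell}$ is read off \eqref{hom:coef} with $\vc{\chi}$ replaced by the independent state $\vc{v}$.

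Next, stationarity. Differentiating $\mathcal{L}$ in $\vc{p}^{m\ell}$ returns the state equation \eqref{local:sol}. Differentiating in $\vc{v}^{rs}$ at $\vc{v}=\vc{\chi}$ produces the adjoint equation, whose source is $\eta_{ijk\ell}(A^H_{ijk\ell}-A^t_{ijk\ell})$ multiplied by $\int_Y\mathbb{A}^{\epsilon}\big(\vc{E}^{k\ell}+\vc{\varepsilon}(\vc{\chi}^{k\ell})\big):\vc{\varepsilon}(\vc{w})\,d\vc{y}$; this vanishes for every $\vc{w}\in V$ by \eqref{local:sol} together with the symmetry $A^{\epsilon}_{pqrs}=A^{\epsilon}_{rspq}$. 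Hence $\vc{p}^{m\ell}=\vc{0}$: the functional is self-adjoint and all state-derivative contributions drop out, so $\partial J/\partial S_i$ equals the partial derivative of $\mathcal{L}$ in the shape with $\vc{v}=\vc{\chi}$ frozen, and only the explicit dependence of $\mathbb{A}^{\epsilon}(d_{\mathbf{S}})$ and of the $\iota_k$ on the shape then remains.

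Then I would perform the algebraic reduction and the geometric transport. Differentiation with respect to $S_1$ passes through $h_\epsilon(d_{S_1})$ alone, and a one-line computation identifies $\partial \mathbb{A}^{\epsilon}/\partial h_\epsilon(d_{S_1})$ with $\mathbb{A}^{\epsilon*}(d_{S_2})$ of \eqref{A:star} and $\partial\big(\sum_k\ell_k\iota_k\big)/\partial h_\epsilon(d_{S_1})$ with $h^{*}_\epsilon(d_{S_2})$ of \eqref{h:star} (the relative sign of the two contributions being fixed by the multiplier convention for the volume penalization). There remains the factor $h_\epsilon'(d_{S_1})\,d_{S_1}'(\vc{\theta}^1)$ inside a volume integral over $Y$. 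Invoking the differentiability of the signed distance function (Proposition 2.5 of \cite{ADDM}), $d_{S_1}'(\vc{\theta}^1)=-\vc{\theta}^1\cdot\vc{n}^1$ on $\partial S_1$, extended off the interface by the normal projection; Proposition 2.9 of \cite{ADDM} then transports the integral weighted by $h_\epsilon'(d_{S_1})$ off the $2\epsilon$-tube and onto the interface $\Gamma$, producing the factor $\vc{\theta}^1\cdot\vc{n}^1$ and the leading minus sign. Collecting the two pieces yields the stated expression for $\partial J/\partial S_1(\vc{\theta}^1)$; the computation for $S_2$ is identical with $d_{S_1}$ and $d_{S_2}$ interchanged.

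The hard part will be the geometric step, not the elasticity: once the stationarity cancellation is observed the mechanical content is routine, whereas rigorously differentiating the signed distance function and transporting the thin-layer integral onto $\Gamma$ — keeping track of the Jacobian (curvature) weights of the tubular neighborhood — is exactly what Propositions 2.5 and 2.9 of \cite{ADDM} supply. I would cite those results rather than reprove them, and the care needed is in checking their hypotheses ($S_1,S_2$ smooth, $\vc{\theta}^i\in W^{1,\infty}(\R^N;\R^N)$) and in justifying that Cea's formal differentiation is legitimate here, i.e.\ that $\vc{\chi}^{m\ell}$ is genuinely shape-differentiable.
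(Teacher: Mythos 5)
Your proposal follows essentially the same route as the paper's own proof: the same C\'ea-type Lagrangian with volume multipliers, the same observation that the problem is self-adjoint so the adjoint state vanishes identically, the same identification of $\mathbb{A}^{\epsilon*}$ and $h^{*}_{\epsilon}$ by differentiating $\mathbb{A}^{\epsilon}$ and $\sum_k \ell_k \iota_k$ through $h_{\epsilon}(d_{S_1})$, and the same appeal to Propositions 2.5 and 2.9 of \cite{ADDM} to transport the thin-layer volume integral onto $\Gamma$ with the factor $-\vc{\theta}^1\cdot\vc{n}^1$. Your parenthetical caution about the multiplier sign convention is well placed: with the $+\vc{\mu}\cdot(\int_Y \vc{\iota}\,d\vc{y}-\vc{V}^t)$ convention used in \eqref{Lagrangian} the collected bracket comes out as $+h^{*}_{\epsilon}$, and the $-h^{*}_{\epsilon}$ in the theorem statement corresponds to the opposite convention $J-\sum_k\ell_k C_k$ adopted in the numerical section.
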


\begin{proof}
For each $k,\ell$ we introduce the following Lagrangian for $(\vc{u}^{k\ell},\vc{v},\vc{\mu}) \in V \times V \times \R^{2d}$ associated to problem \eqref{opti:prob},

\begin{gather}\label{Lagrangian}
\begin{aligned} 
\mathcal{L}(\vc{S}, \vc{u}^{k\ell}, \vc{v}, \vc{\mu}) 
= J(\vc{S}) 
& + \int_Y \mathbb{A}^{\epsilon}(d_{\vc{S}}) \, \left( \vc{E}^{k\ell} + \vc{\varepsilon}(\vc{u}^{k\ell}) \right): \vc{\varepsilon}(\vc{v}) \, d\vc{y} + \vc{\mu} \cdot \left( \int_Y \vc{\iota} \, d\vc{y} - \vc{V}^t \right),
\end{aligned}
\end{gather}
where $\vc{\mu}=(\mu_1, \ldots, \mu_4)$ is a vector of Lagrange multipliers for the volume constraint, $\vc{\iota}=(\iota_1, \ldots, \iota_4)$, and $\vc{V}^t=(V_1^t,\ldots,V_4^t)$.

\begin{remark}
Each variable of the Lagrangian is independent of one another and independent of the sub-domains $S_1$ and $S_2$. 
\end{remark}

\subsubsection*{Direct problem}
Differentiating $\mathcal{L}$ with respect to $\vc{v}$ in the direction of some test function $\vc{w} \in V$ we obtain,
\[
\dpair{\frac{ \partial \mathcal{L} }{ \partial \vc{v} }}{ \vc{w} } = \int_Y A^{\epsilon}_{ijrs}(d_{\vc{S}}) \, (E^{k\ell}_{ij} + \varepsilon_{ij}(\vc{u^{k\ell}})) \, \varepsilon_{rs}(\vc{w}) \, d\vc{y},
\]
upon setting this equal to zero we obtain the variational formulation in \eqref{local:sol}.

\subsubsection*{Adjoint problem}
Differentiating $\mathcal{L}$ with respect to $\vc{u}^{k\ell}$ in the direction $\vc{w} \in V$ we obtain,

\begin{align*}
\dpair{\frac{ \partial \mathcal{L} }{ \partial \vc{u}^{k\ell} }}{ \vc{w} } 
& =  \eta_{ijk\ell} \, \left( A^H_{ijk\ell} - A^t_{ijk\ell} \right) \, \int_Y A^{\epsilon}_{mqrs}(d_{\vc{S}}) \, (E^{k\ell}_{mq} + \varepsilon_{mq}(\vc{u^{k\ell}})) \, \varepsilon_{rs}(\vc{w}) \, d\vc{y} \\
&+\int_Y A^{\epsilon}_{mqrs}(d_{\vc{S}}) \, \varepsilon_{mq}(\vc{w}) \, \varepsilon_{rs}(\vc{v}) \, d\vc{y}.
\end{align*}
We immediately observe that the integral over $Y$ on the first line is equal to $0$ since it is the variational formulation \eqref{local:sol}. Moreover, if we chose $\vc{w} = \vc{v}$ then by the positive definiteness assumption of the tensor $\mathbb{A}$ as well as the periodicity of $\vc{v}$ we obtain that adjoint solution is identically zero, $\vc{v} \equiv 0.$

\subsubsection*{Shape derivative}
Lastly, we need to compute the shape derivative in directions $\vc{\theta}^1$ and $\vc{\theta}^2$ for each sub-domain $S_1$, $S_2$ respectively. Here we will carry out computations for the shape derivative with respect to the sub-domain $S_1$ with calculations for the sub-domain $S_2$ carried out in a similar fashion. We know (see \cite{AllCon}) that 

\begin{equation} \label{SD}
	\dpair{\frac{\partial J}{\partial S_i}(\vc{S})}{\vc{\theta}^i} = \dpair{\frac{\partial \mathcal{L}}{\partial S_i}(\vc{S},\vc{\chi}^{k\ell},\vc{0},\vc{\lambda})}{\vc{\theta}^i} \text{ for } i=1,2.
\end{equation}

Hence, 
\begin{align*}
\frac{ \partial \mathcal{L} }{ \partial S_1 }( \vc{\theta}^1 ) 
& =  \eta_{ijk\ell} \left( A^H_{ijk\ell} - A^t_{ijk\ell} \right) \int_Y d'_{S_1}(\vc{\theta}^1) \frac{\partial A^{\epsilon}_{mqrs}}{\partial S_1} (d_{\vc{S}}) (E^{k\ell}_{mq} + \varepsilon_{mq}(\vc{u^{k\ell}})) \, (E^{ij}_{rs} + \varepsilon_{rs}(\vc{u}^{ij})) d\vc{y} \\
&+\int_Y d'_{S_1}(\vc{\theta}^1) \frac{\partial A^{\epsilon}_{ijrs}}{\partial d_{S_1}} (d_{\vc{S}}) (E^{k\ell}_{ij} + e_{yij}(\vc{u^{k\ell}})) \varepsilon_{rs}(\vc{v}) d\vc{y} \\
&+ \ell_1 \int_Y - \, d'_{S_1}(\vc{\theta}^1) \frac{\partial h_{\epsilon}(d_{S_1})}{\partial d_{S_1}} (1 - h_{\epsilon}(d_{S_2})) d\vc{y}
+ \ell_2 \, \int_Y d'_{S_1}(\vc{\theta}^1) \, \frac{\partial h_{\epsilon}(d_{S_1})}{\partial d_{S_1}} \, (1 - h_{\epsilon}(d_{S_2})) \, d\vc{y} \\
&+ \ell_3 \, \int_Y - \, d'_{S_1}(\vc{\theta}^1) \, \frac{\partial h_{\epsilon}(d_{S_1})}{\partial d_{S_1}} \, h_{\epsilon}(d_{S_2}) \, d\vc{y} 
+ \ell_4 \, \int_Y d'_{S_1}(\vc{\theta}^1) \, \frac{\partial h_{\epsilon}(d_{S_1})}{\partial d_{S_1}} \, h_{\epsilon}(d_{S_2}) \, d\vc{y}.
\end{align*}
The term on the second line is zero due to the fact that the adjoint solution is identically zero. Moreover, applying Proposition $2.5$ and then Proposition $2.9$ from \cite{ADDM} as well as using the fact that we are dealing with thin interfaces we obtain, 

\begin{align*}
\frac{ \partial \mathcal{L} }{ \partial S_1 }( \vc{\theta}^1 ) 
& =  -\eta_{ijk\ell} \, \left( A^H_{ijk\ell} - A^t_{ijk\ell} \right) \, \int_{\Gamma} \vc{\theta}^1 \cdot \vc{n}^1 \, A^{\epsilon*}_{mqrs}(d_{S_2}) \, (E^{k\ell}_{mq} + \varepsilon_{mq}(\vc{u^{k\ell}})) \, (E^{ij}_{rs} + \varepsilon_{rs}(\vc{u}^{ij})) \, d\vc{y} \\
&+ \ell_1 \, \int_{\Gamma} \vc{\theta}^1 \cdot \vc{n}^1 \, (1 - h_{\epsilon}(d_{S_2})) \, d\vc{y}
- \ell_2 \, \int_{\Gamma} \vc{\theta}^1 \cdot \vc{n}^1 \, (1 - h_{\epsilon}(d_{S_2})) \, d\vc{y} \\
&+ \ell_3 \, \int_{\Gamma} \vc{\theta}^1 \cdot \vc{n}^1 \, h_{\epsilon}(d_{S_2}) \, d\vc{y} 
- \ell_4 \, \int_{\Gamma} \vc{\theta}^1 \cdot \vc{n}^1 \, h_{\epsilon}(d_{S_2}) \, d\vc{y} 
\end{align*}
where $\vc{n}^1$ denotes the outer unit normal to $S_1.$ Thus, if we let $\vc{u}^{k\ell} = \vc{\chi}^{k\ell}$, the solution to the unit cell \eqref{local:sol} and collect terms the result follows.
\end{proof}

\begin{remark}
The tensor $\mathbb{A}^{\ep *}$ in \eqref{A:star} as well $h^{\ep*}$ in \eqref{h:star} of the shape derivatives in {\bf Theorem \ref{Shape:Thm}} depend on the signed distance function in an alternate way which provides an insight into the coupled nature of the problem. We further remark, that in the smooth interface context, the collective boundary $\Gamma$ to be displaced in {\bf Theorem \ref{Shape:Thm}}, is not an actual boundary but rather a tubular neighborhood.
\end{remark}

\subsection{The numerical algorithm}
The result of {\bf Theorem \ref{Shape:Thm}} provides us with the shape derivatives in the directions $\vc{\theta}^1$, $\vc{\theta}^2$ respectively. If we denote by,
\[
	v^1 = \frac{\partial J}{\partial S_1}(\vc{S}), \quad v^2 = \frac{\partial J}{\partial S_2}(\vc{S}),
\]
a descent direction is then found by selecting the vector field $\vc{\theta}^1=v^1\vc{n}^1$, $\vc{\theta}^2=v^2\vc{n}^2.$ To move the shapes $S_1, S_2$ in the directions $v^1, v^2$ is done by transporting each level set, $\phi_i, \quad i=1,2$ independently by solving the Hamilton-Jacobi type equation

\begin{equation} \label{HJ:phi}
	\frac{\partial \phi^i}{\partial t} + v^i \, |\nabla \phi^i| = 0, \quad i=1,2.
\end{equation}
Moreover, we extend and regularize the scalar velocity $v^i, \, i=1,2$ to the entire domain $Y$ as in \cite{AJT}, \cite{ADDM}. The extension is done by solving the following problem for $i=1,2$,

\begin{align*}
- \alpha^2 \, \Delta \vc{\theta}^i + \vc{\theta}^i & = 0 \text{ in } Y, \nonumber \\
\nabla \vc{\theta}^i \, \vc{n^i} & = v^i\vc{n}^i \text{ on } \Gamma, \nonumber \\
\vc{\theta}^i & \text{ Y--periodic}, \nonumber
\end{align*}
where $\alpha > 0$ is small regularization parameter. Hence, using the same algorithm as in \cite{AJT}, for $i=1,2$ we have:

\subsubsection{Algorithm} We initialize $S_i^0 \subset U_{ad}$ through the level sets $\phi^i_0$ defined as the signed distance function of the chosen initial topology, then

{\small \it
\begin{itemize}
\item[1.] iterate until convergence for $k \ge 0$:
	\begin{itemize}
		
		\item[a.] Calculate the local solutions $\vc{\chi}^{m\ell}_k$ for $m,\ell=1,2$ by solving the linear 
elasticity problem \eqref{local:sol} on $\mathcal{O}^k := S_1^k \cup S_2^k$.
		\item[b.] Deform the domain $\mathcal{O}^k$ by solving the Hamilton-Jacobi equations \eqref{HJ:phi} for $i=1,2$. The new shape $\mathcal{O}^{k+1}$ is characterized by the level sets $\phi_i^{k+1}$ solutions of \eqref{HJ:phi} after a time step $\Delta t_k$ starting from the initial condition $\phi_i^k$ with velocity $v^i_k$ computed in terms of the local problems $\vc{\chi^{m\ell}_k}$ for $i=1,2$. The time step $\Delta t_k$ is chosen so that $J(\vc{S}^{k+1}) \le J(\vc{S}^k)$. 
	\end{itemize}
\item[2.] From time to time, for stability reasons, we re-initialize the level set functions $\phi_i^k$ by solving \eqref{reinit} for $i=1,2$.
\end{itemize}}

\section{Numerical examples}

For all the examples that follow we have used a symmetric $100 \times 100$ mesh of $P1$ elements. We imposed volume equality constraints for each phase. In the smooth interpolation of the material properties in formula \eqref{smoothing}, we set $\epsilon$ equal to $2\Delta x$ where $\Delta x$ is the grid size. The parameter $\ep$ is held fixed through out (see \cite{ADDM} and \cite{GM14}). The Lagrange multipliers were updated at each iteration the following way, $\ell^{n+1}_j = \ell^{n}_j - \beta \, \left( \int_Y \iota^n_j \, d\vc{y} -V^t_j \right)$, where $\beta$ is a small parameter. 
Due to the fact that this type of problem suffers from many local minima that may not result in a shape, instead of putting a stopping criterion in the algorithm we fix, a priori, the number iterations. Furthermore, since we have no knowledge of what volume constraints make sense for a particular shape, we chose not to strictly enforce the volume constraints for the first two examples. However, for examples $3$ and $4$ we use an augmented Lagrangian to actually enforce the volume constraints,
\[
L(\vc{S} , \vc{\mu}, \vc{\beta}) = J(\vc{S}) - \sum_{i=1}^4 \mu_i C_i(\vc{S}) + \sum_{i=1}^4 \frac{1}{2} \, \beta_i C_i^2(\vc{S}),
\]
here $C_i(\vc{S})$ are the volume constraints and $\beta$ is a penalty term. The Lagrange multipliers are updated as before, however, this time we update the penalty term, $\beta$ every $5$ iterations. All the calculations were carried out using the software {\tt FreeFem++} \cite{FH12}. 

\begin{remark}
We remark that for the augmented Lagrangian we need to compute the new shape derivative that would result. The calculations are similar as that of Theorem \ref{Shape:Thm} and, therefore, we do not detail them here for the sake of brevity. 
\end{remark}

\subsection{Example 1}

The first structure to be optimized is multilevel material that attains an apparent Poisson ratio of $-1$. The Young moduli of the four phases are set to $E^1=0.91$, $E^2=0.0001$, $E^3=1.82$, $E^4=0.0001$. Here phase $2$ and phase $4$ represent void, phase $2$ represents a material that is twice as stiff as the material in phase $3$. The Poisson ratio of each phase is set to $\nu=0.3$ and the volume constraints were set to $V^t_1=30\%$ and $V^t_3=4\%.$
\begin{table}[h]
\center
\begin{tabular}{c|ccc}
$ijkl$ & $1111$ & $1122$ & $2222$ \\
\hline
$\eta_{ijkl}$ & $1$ & $30$ & $1$ \\
$A^H_{ijkl}$ & $0.12$ & $-0.09$ & $0.12$ \\
$A^t_{ijkl}$ & $0.1$ & $-0.1$ & $0.1$ 
\end{tabular}
\caption{Values of weights, final homogenized coefficients and target coefficients}
\end{table}
\vspace{-0.5cm}
From {\sc figure} \ref{Im:aux3} we observe that the volume constraint for the stiffer material is not adhered to the target volume. In this cases the algorithm used roughly $16\%$ of the material with Poisson ratio $1.82$ while the volume constraint for the weaker material was more or less adhered to the target constraint. 

\newpage

\begin{figure}[h]
\centering
\begin{tabular}{cc}
\subf{\includegraphics[width=60mm]{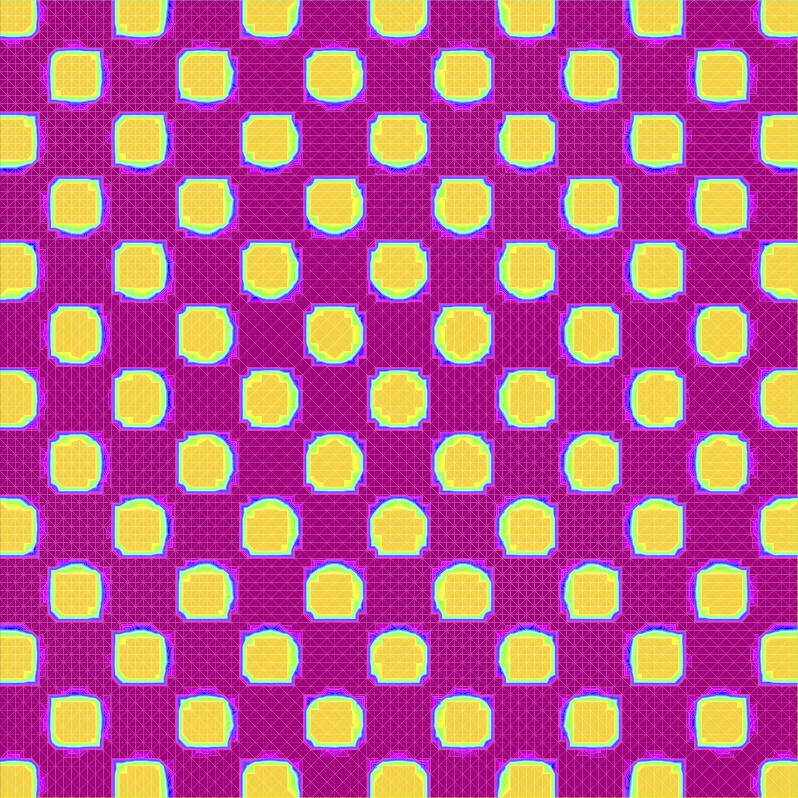}}
     {Initial shape}
&
\subf{\includegraphics[width=60mm]{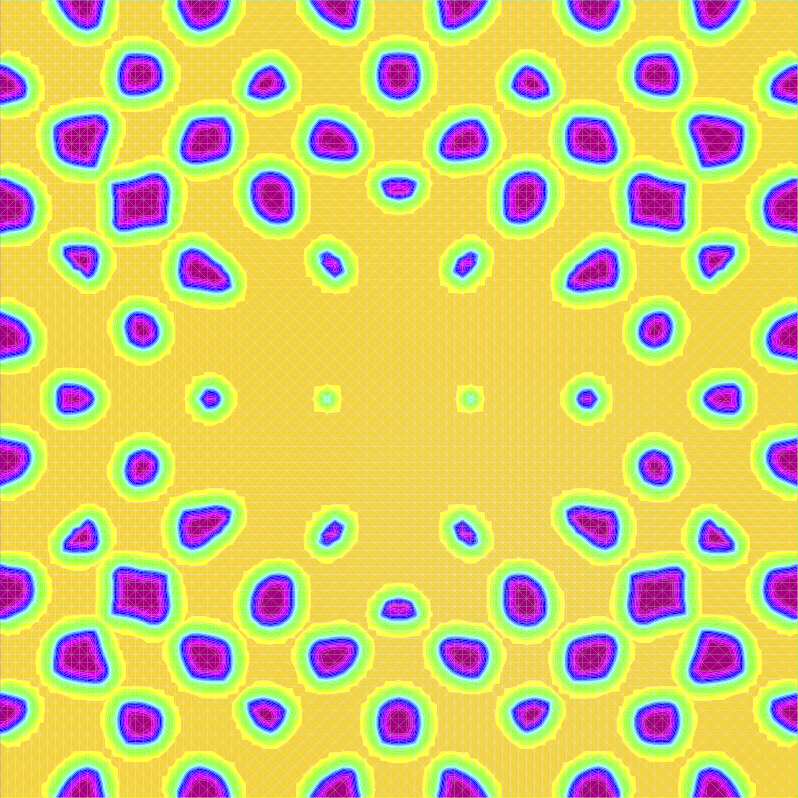}}
     {iteration $5$}
\\
\subf{\includegraphics[width=60mm]{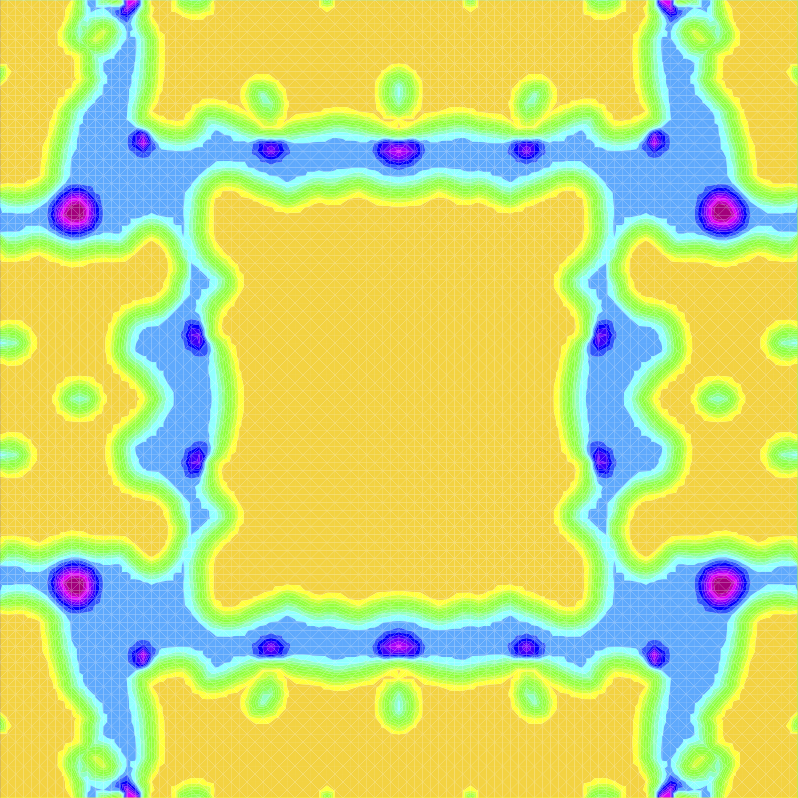}}
     {iteration $10$}
&
\subf{\includegraphics[width=60mm]{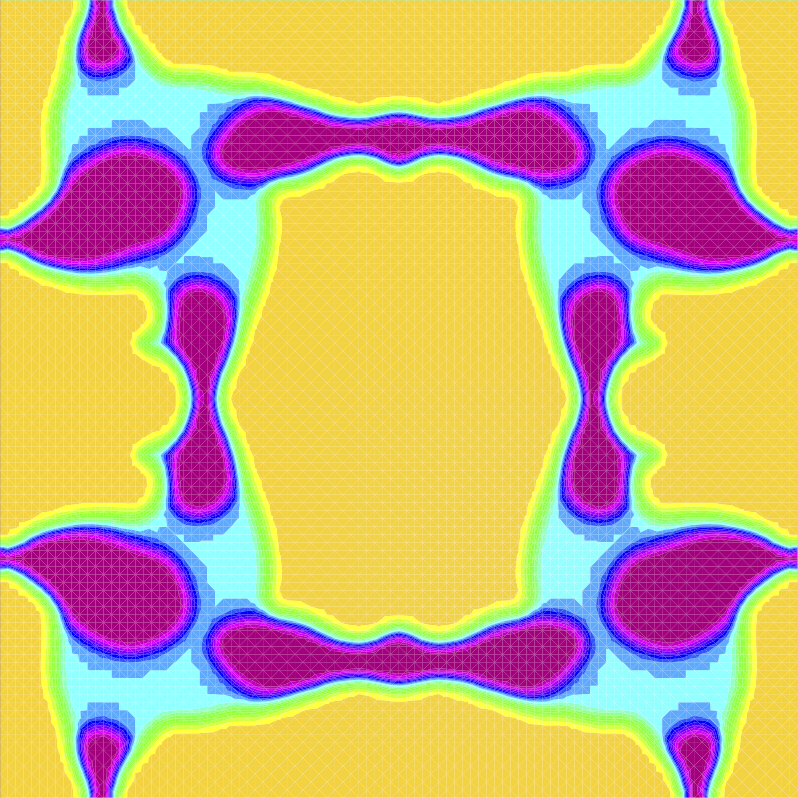}}
     {iteration $50$}
\\
\subf{\includegraphics[width=60mm]{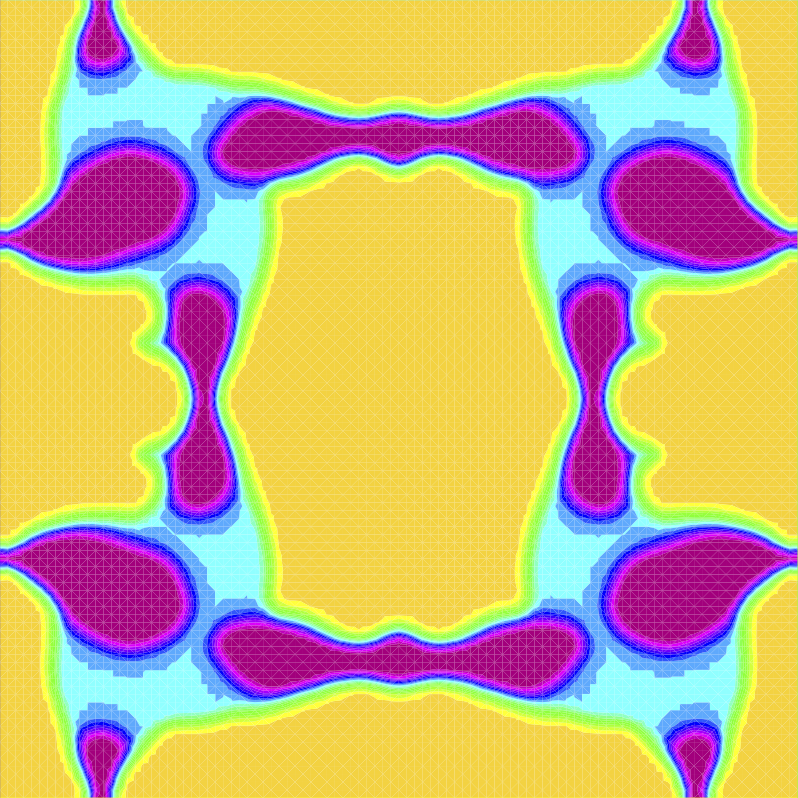}}
     {iteration $100$}
&
\subf{\includegraphics[width=60mm]{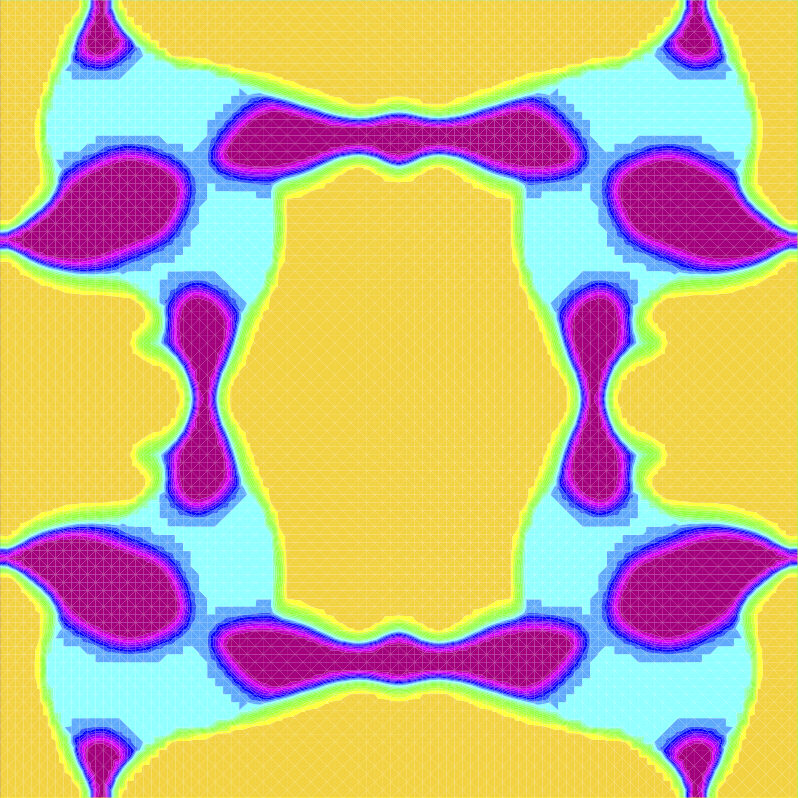}}
     {iteration $200$}
\end{tabular}
\caption{The design process of the material at different iteration steps. \break \protect \newboxsymbol{magenta}{magenta} Young modulus of $1.82$,  \protect \newboxsymbol{cyan}{cyan} Young modulus of $0.91$, \protect \newboxsymbol{yellow}{yellow} void.}
\end{figure}

\begin{figure}[h]
\centering
\begin{tabular}{cc}
\subf{\includegraphics[width=55mm]{M3_200_2_}}
{}
&
\subf{\includegraphics[width=55mm]{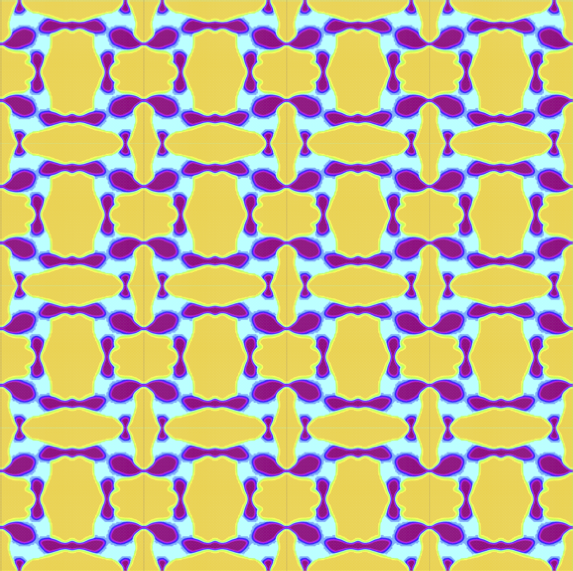}} 
{}
\end{tabular}
\caption{On the left we have the unit cell and on the right we have the macro-structure obtained by periodic assembly of the material with apparent Poisson ratio $-1$.}
\end{figure}

\begin{figure}[h]
\centering
\begin{tabular}{cc}
\subf{\includegraphics[width=55mm]{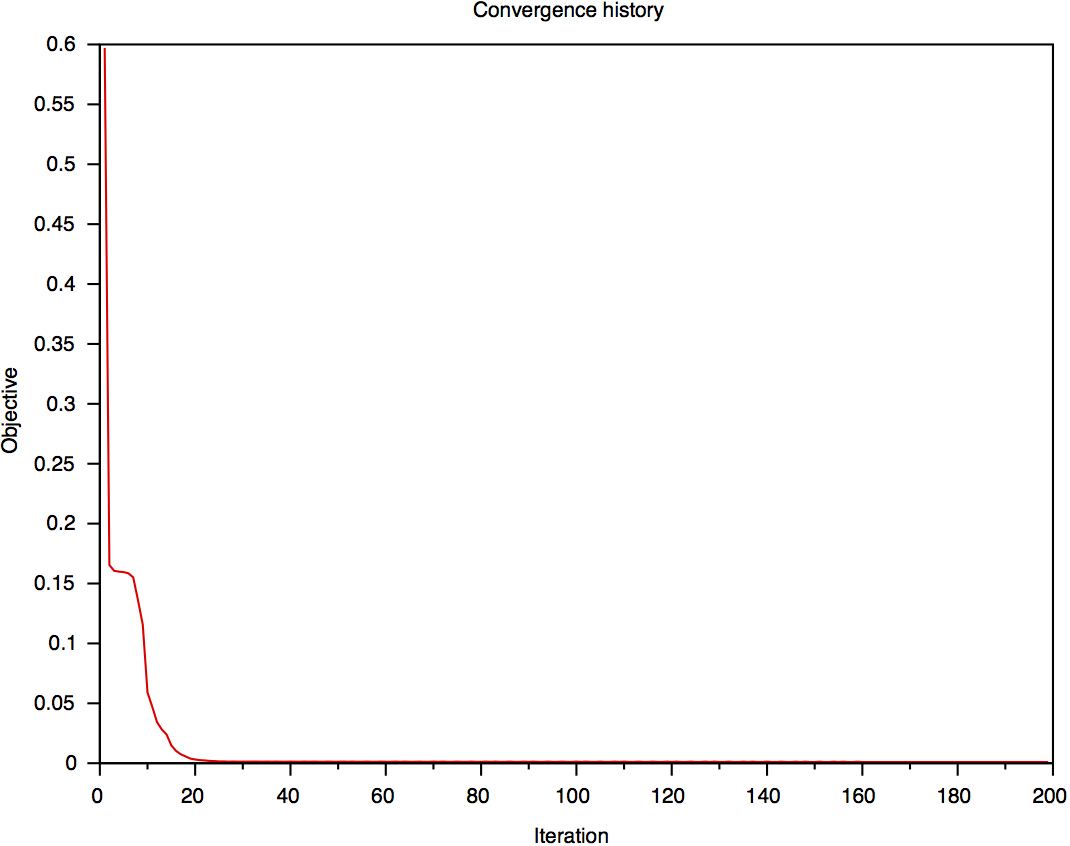}}
     {Evolution of the values of the objective }
&
\subf{\includegraphics[width=55mm]{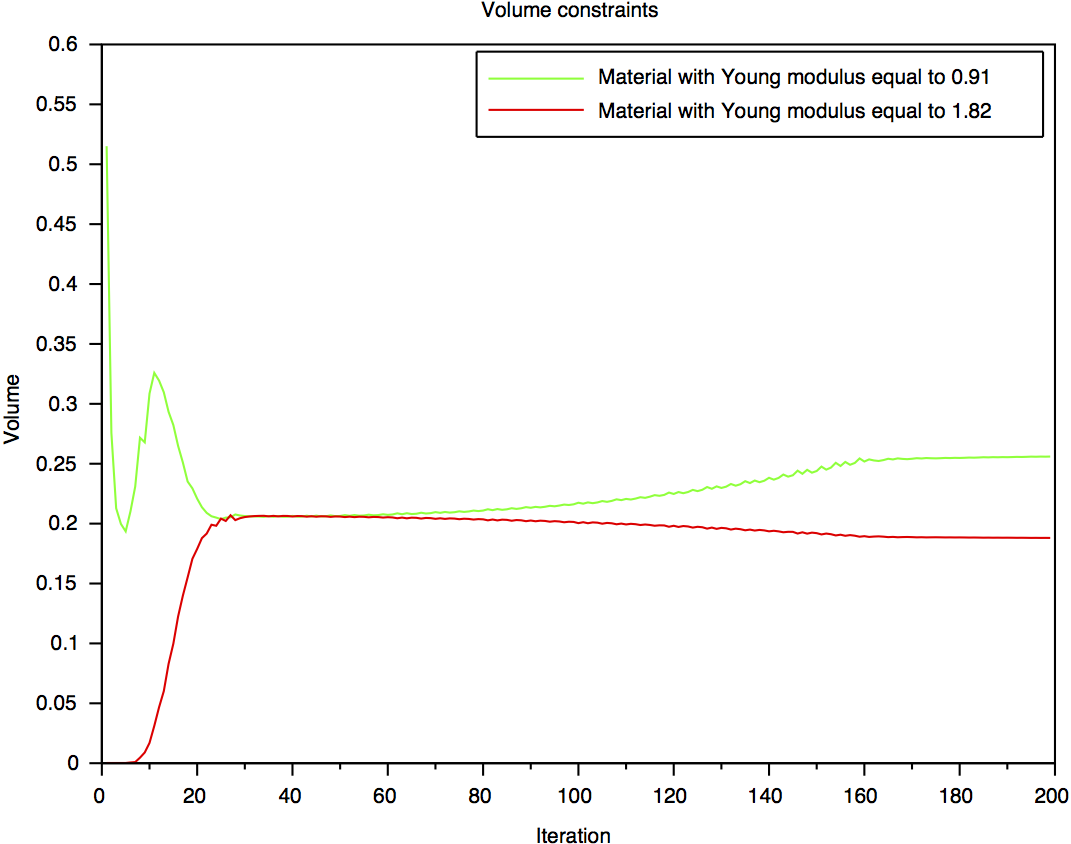}}
     {Evolution of the volume constraints}
\end{tabular}
\caption{Convergence history of objective function and the volume constraints.}
\label{Im:aux3}
\end{figure}

\subsection{Example 2}

The second structure to be optimized is multilevel material that also attains an apparent Poisson ratio of $-1$. Every assumption remains the same as in the first example. The Young moduli of the four phases are set to $E^1=0.91$, $E^2=0.0001$, $E^3=1.82$, $E^4=0.0001$. The Poisson ratio of each material is set to $\nu=0.3$, however, this times we require that the volume constraints be set to $V^t_1=33\%$ and $V^t_3=1\%.$
\begin{table}[h]
\center
\begin{tabular}{c|ccc}
$ijkl$ & $1111$ & $1122$ & $2222$ \\
\hline
$\eta_{ijkl}$ & $1$ & $30$ & $1$ \\
$A^H_{ijkl}$ & $0.11$ & $-0.09$ & $0.12$ \\
$A^t_{ijkl}$ & $0.1$ & $-0.1$ & $0.1$ 
\end{tabular}
\caption{Values of weights, final homogenized coefficients and target coefficients}
\end{table}

Again, from {\sc figure} \ref{Im:aux4} we observe that the volume constraint for the stiffer material is not adhered to the target volume. In this cases the algorithm used roughly $15\%$ of the material with Poisson ratio $1.82$ while the volume constraint for the weaker material was more or less adhered to the target constraint. 

\newpage

\begin{figure}[h]
\centering
\begin{tabular}{cc}
\subf{\includegraphics[width=60mm]{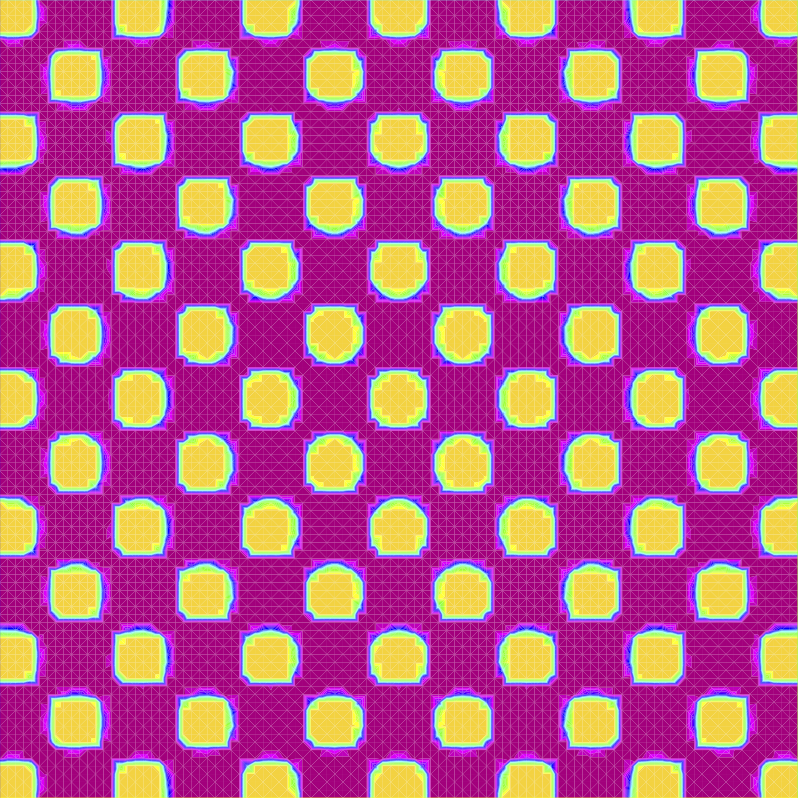}}
     {Initial shape}
&
\subf{\includegraphics[width=60mm]{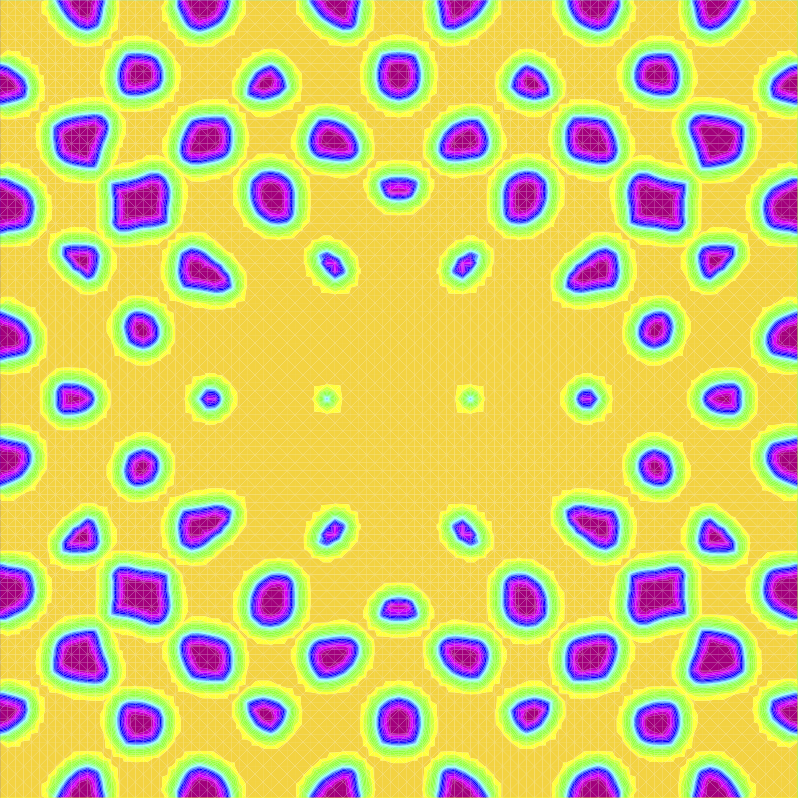}}
     {iteration $5$}
\\
\subf{\includegraphics[width=60mm]{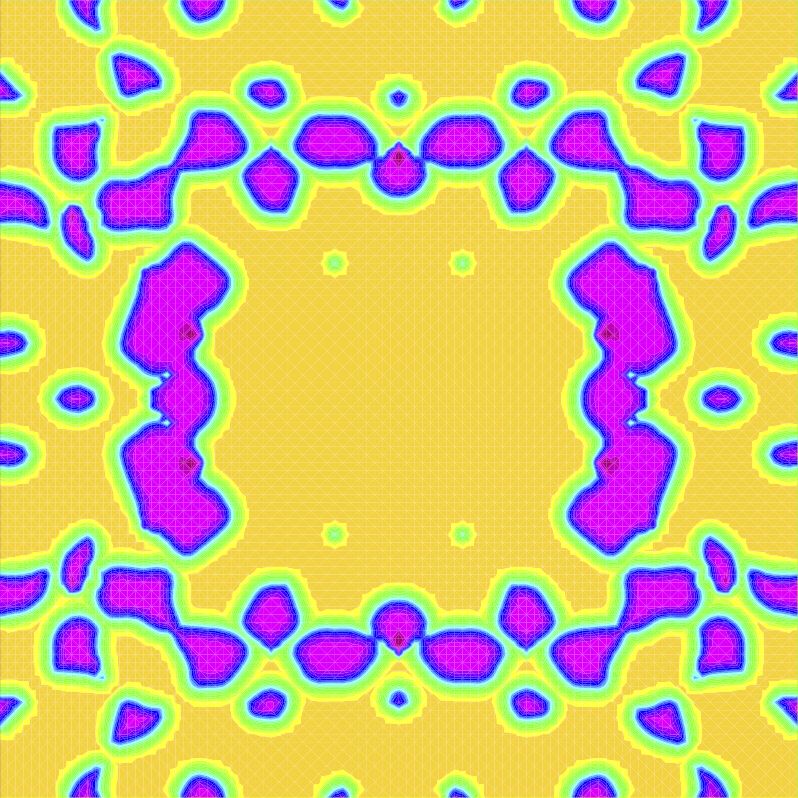}}
     {iteration $10$}
&
\subf{\includegraphics[width=60mm]{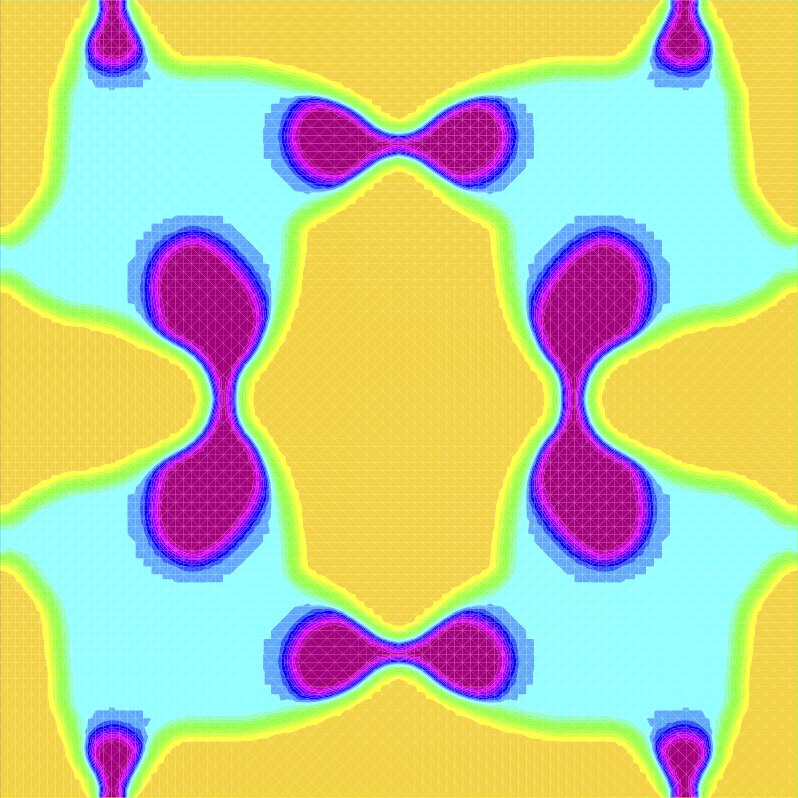}}
     {iteration $50$}
\\
\subf{\includegraphics[width=60mm]{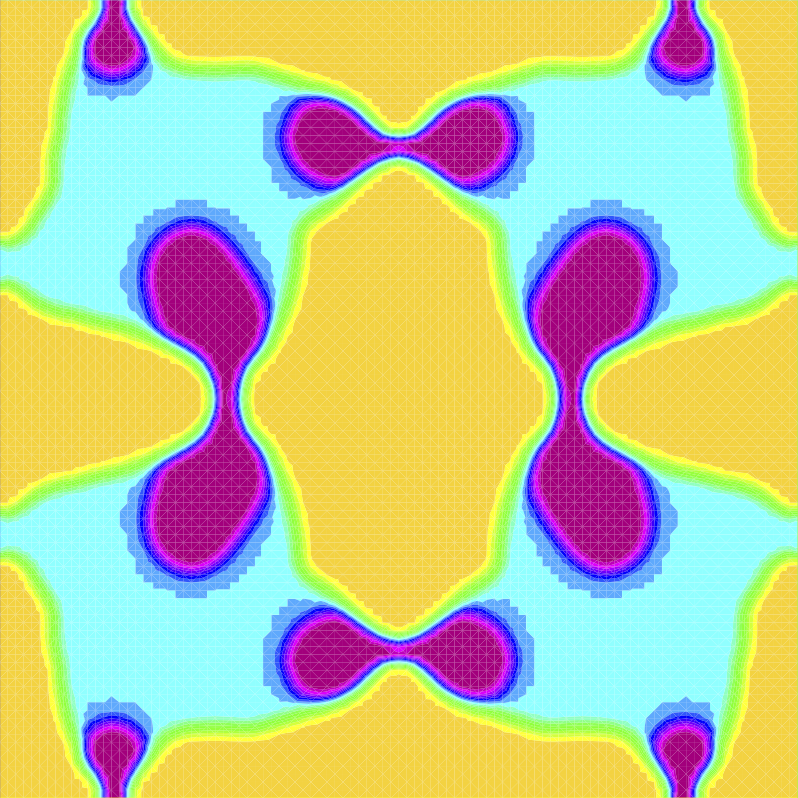}}
     {iteration $100$}
&
\subf{\includegraphics[width=60mm]{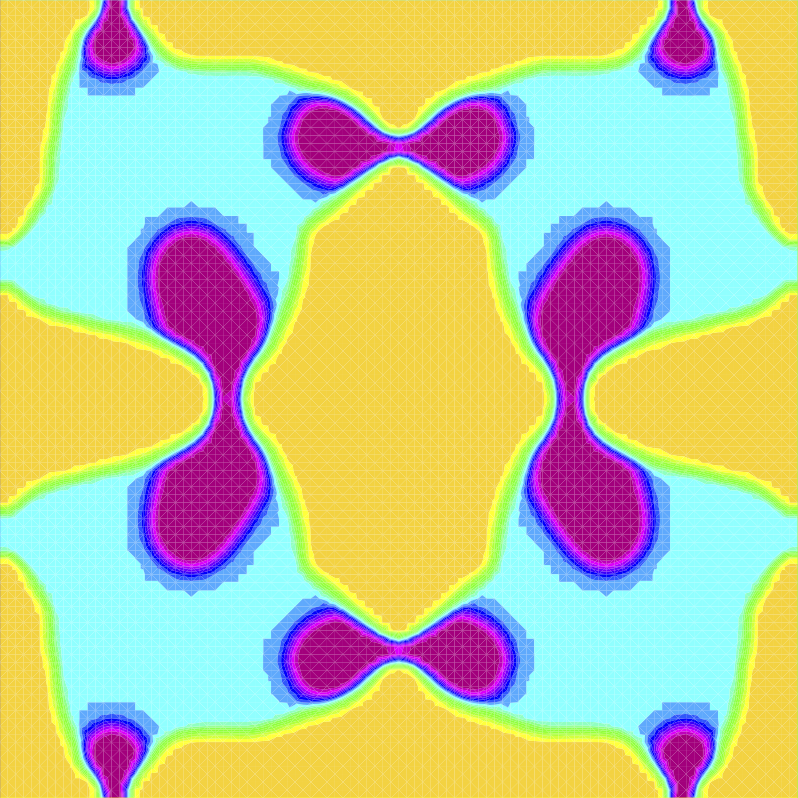}}
     {iteration $200$}
\end{tabular}
\caption{The design process of the material at different iteration steps. \break \protect \newboxsymbol{magenta}{magenta} Young modulus of $1.82$,  \protect \newboxsymbol{cyan}{cyan} Young modulus of $0.91$, \protect \newboxsymbol{yellow}{yellow} void.}
\end{figure}

\begin{figure}[h]
\centering
\begin{tabular}{cc}
\subf{\includegraphics[width=55mm]{M4_200_2_}}
{}
&
\subf{\includegraphics[width=55mm]{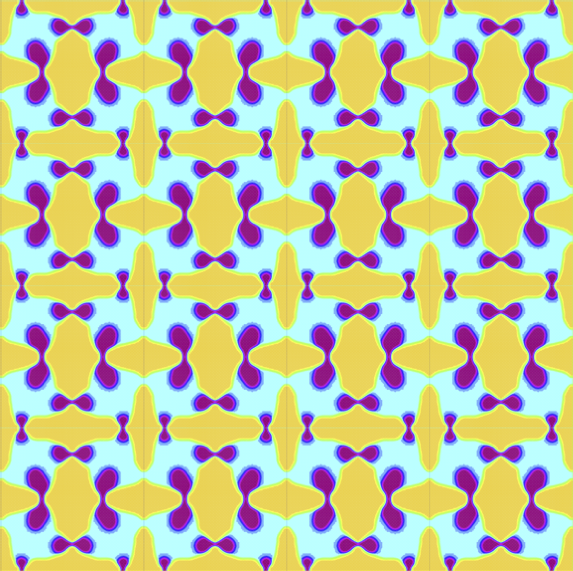}}
{}
\end{tabular}
\caption{On the left we have the unit cell and on the right we have the macro-structure obtained by periodic assembly of the material with apparent Poisson ratio $-1$.}
\end{figure}

\begin{figure}[h]
\centering
\begin{tabular}{cc}
\subf{\includegraphics[width=55mm]{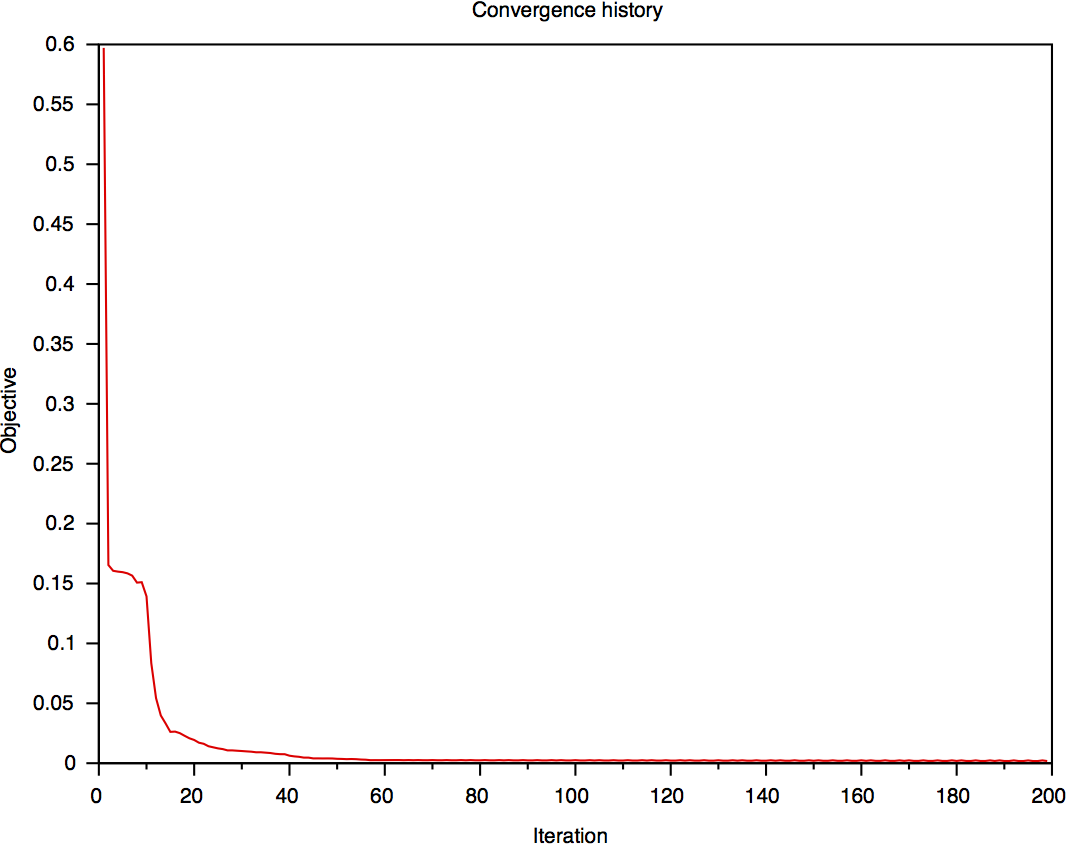}}
     {Evolution of the values of the objective }
&
\subf{\includegraphics[width=55mm]{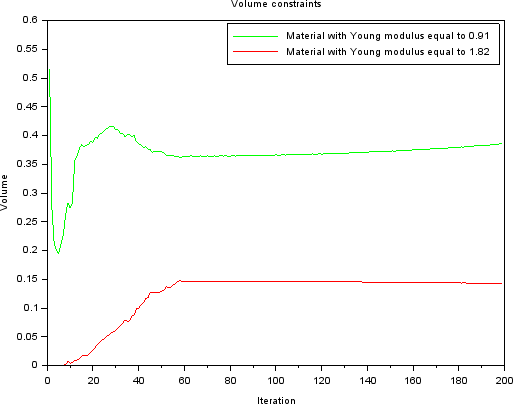}}
     {Evolution of the volume constraints}
\end{tabular}
\caption{Convergence history of objective function and the volume constraints.}
\label{Im:aux4}
\end{figure}

\subsection{Example 3}

The third structure to be optimized is multi-layer material with target apparent Poisson ratio of $-0.5$. For this example we used an augmented Lagrangian to enforce the volume constraints. The Lagrange multiplier was updated the same way as before, however, the penalty parameter $\beta$ was updated every five iterations. The Young moduli of the four phases are set to $E^1=0.91$, $E^2=0.0001$, $E^3=1.82$, $E^4=0.0001$ and the volume target constraints were set to $V^t_1=38.5\%$ and $V^t_3=9.65\%.$

\begin{table}[h]
\center
\begin{tabular}{c|ccc}
$ijkl$ & $1111$ & $1122$ & $2222$ \\
\hline
$\eta_{ijkl}$ & $1$ & $10$ & $1$ \\
$A^H_{ijkl}$ & $0.18$ & $-0.08$ & $0.18$ \\
$A^t_{ijkl}$ & $0.2$ & $-0.1$ & $0.2$ 
\end{tabular}
\caption{Values of weights, final homogenized coefficients and target coefficients}
\end{table}
\vspace{-0.5cm}
Again just as in the previous two examples we observe that the volume constraint for the stiffer material is not adhered to the target volume, even though for this example a augmented Lagrangian was used. In this cases the algorithm used roughly $20\%$ of the material with Poisson ratio $1.82$ while the volume constraint for the weaker material was more or less adhered to the target constraint. 
\begin{figure}[h]
\centering
\begin{tabular}{cc}
\subf{\includegraphics[width=60mm]{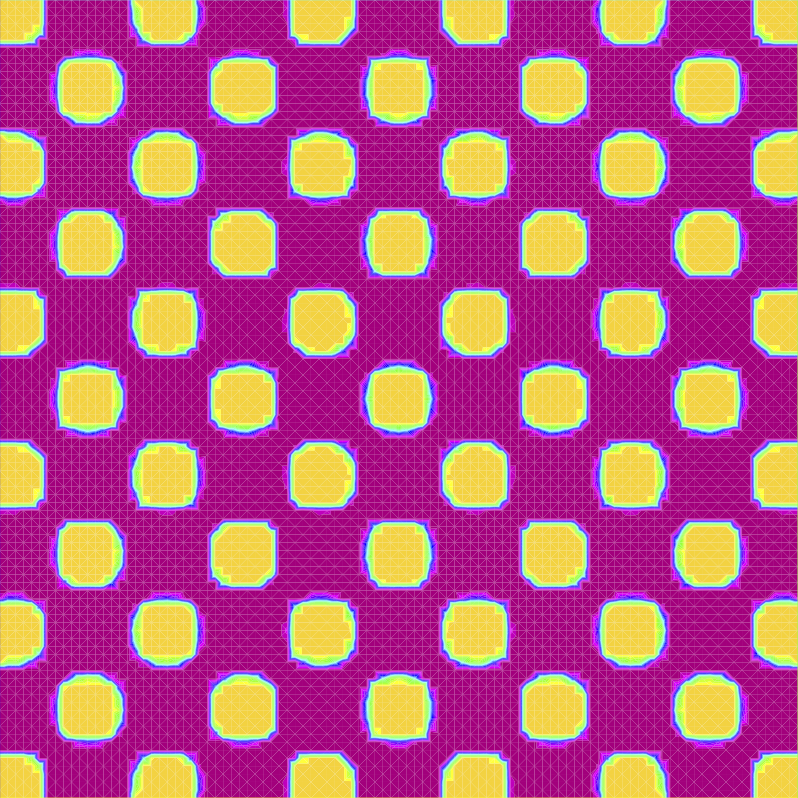}}
     {Initial shape}
&
\subf{\includegraphics[width=60mm]{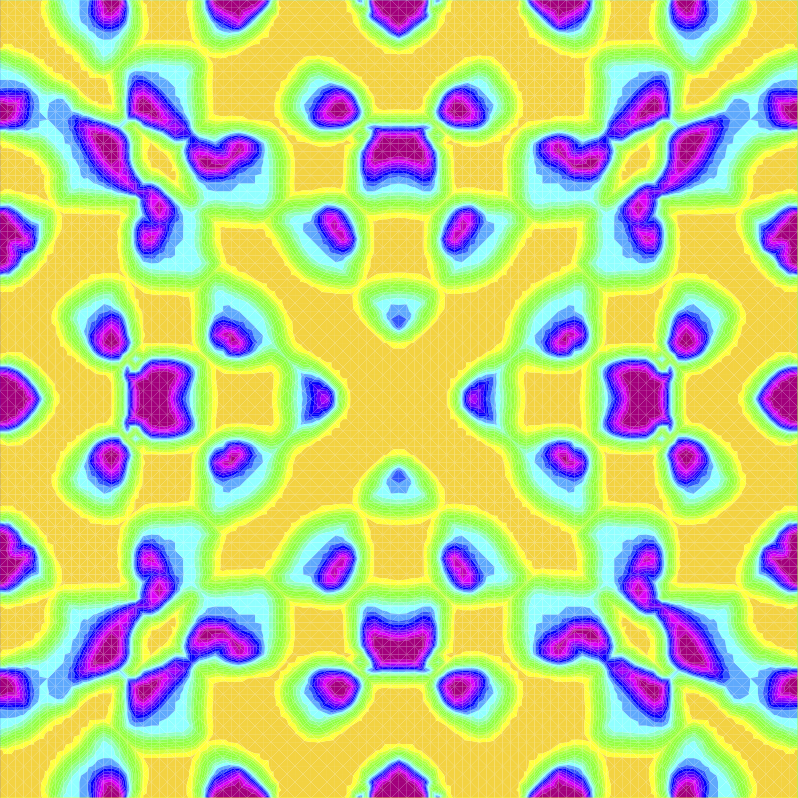}}
     {iteration $5$}
\\
\subf{\includegraphics[width=60mm]{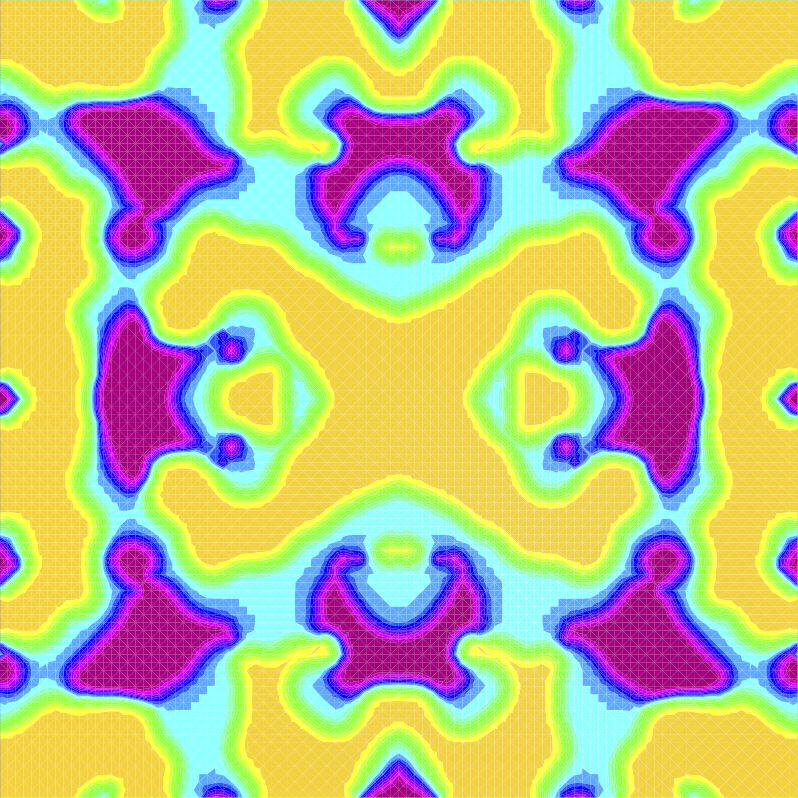}}
     {iteration $10$}
&
\subf{\includegraphics[width=60mm]{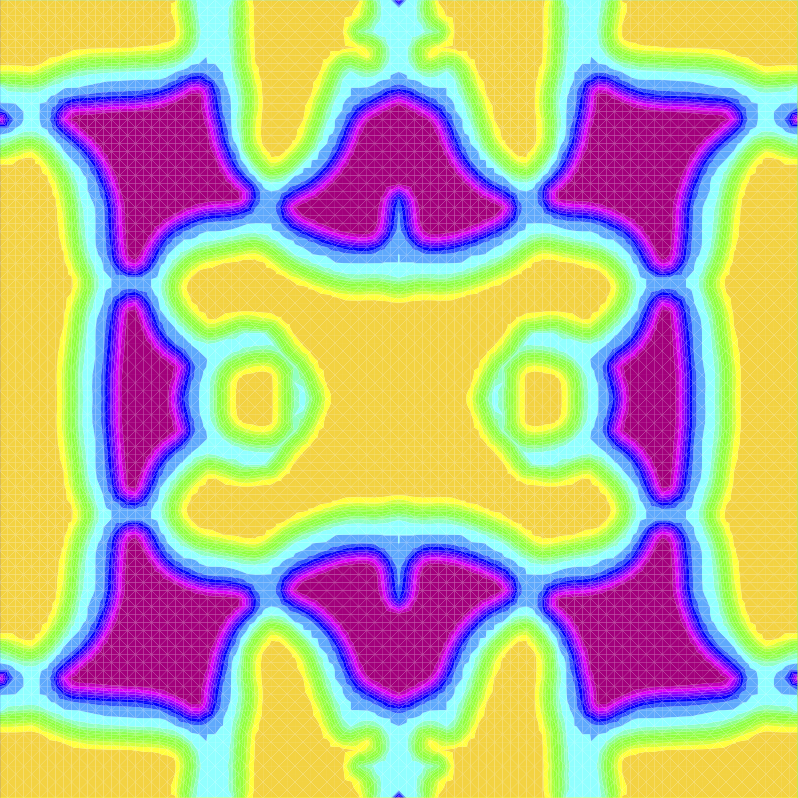}}
     {iteration $50$}
\\
\subf{\includegraphics[width=60mm]{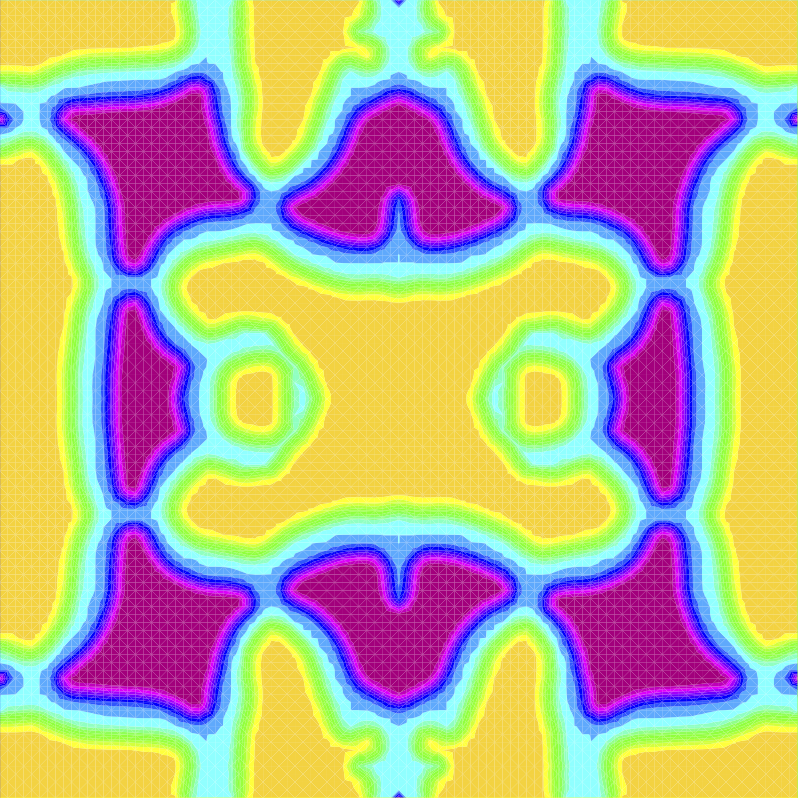}}
     {iteration $100$}
&
\subf{\includegraphics[width=60mm]{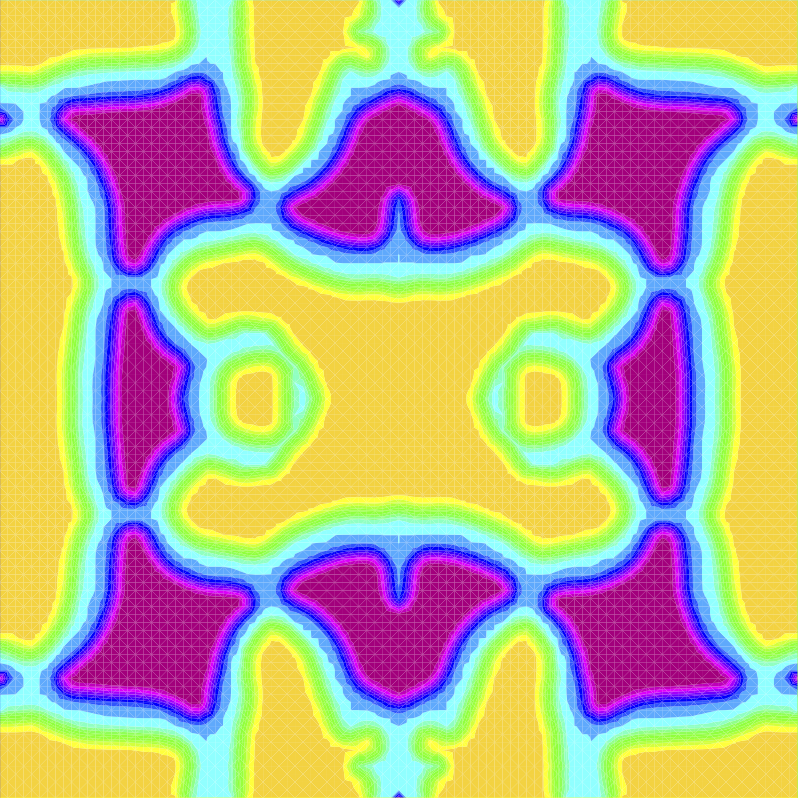}}
     {iteration $200$}
\end{tabular}
\caption{The design process of the material at different iteration steps. \break \protect \newboxsymbol{magenta}{magenta} Young modulus of $1.82$,  \protect \newboxsymbol{cyan}{cyan} Young modulus of $0.91$, \protect \newboxsymbol{yellow}{yellow} void.}
\end{figure}

\begin{figure}[h]
\centering
\begin{tabular}{cc}
\subf{\includegraphics[width=55mm]{M2_200}}
{}
&
\subf{\includegraphics[width=55mm]{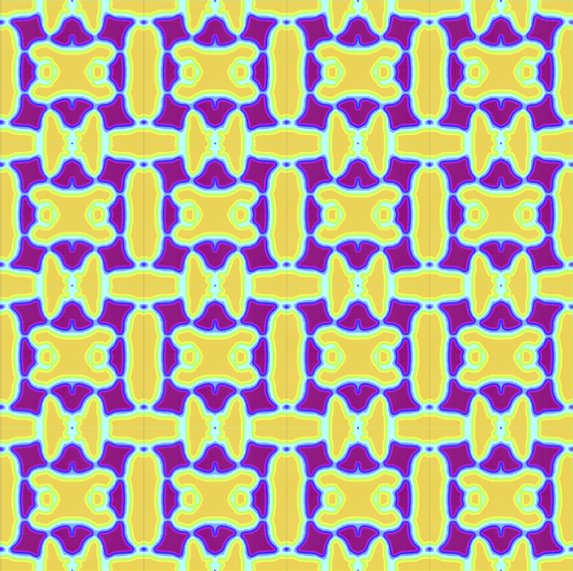}}
{}
\end{tabular}
\caption{On the left we have the unit cell and on the right we have the macro-structure obtained by periodic assembly of the material with apparent Poisson ratio $-0.5$.}
\end{figure}

\begin{figure}[h]
\centering
\begin{tabular}{cc}
\subf{\includegraphics[width=55mm]{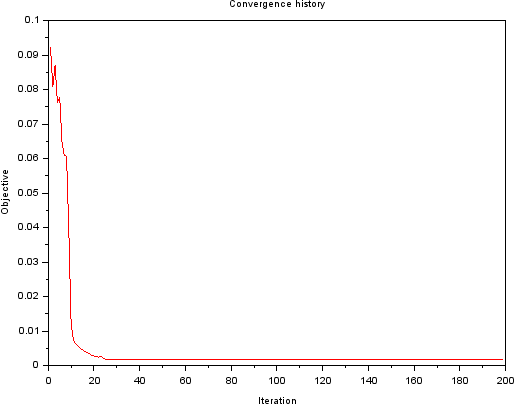}}
     {Evolution of the values of the objective }
&
\subf{\includegraphics[width=55mm]{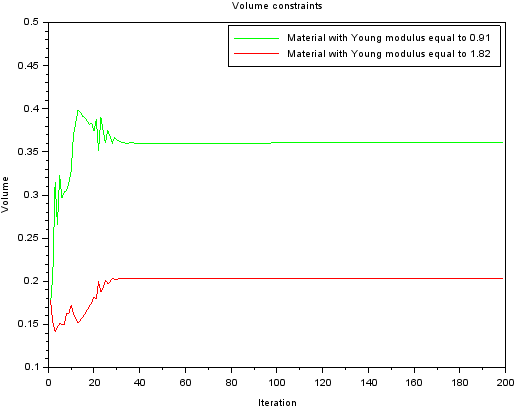}}
     {Evolution of the volume constraints}
\end{tabular}
\caption{Convergence history of the objective function and the volume constraints.}
\end{figure}

\subsection{Example 4}

The fourth structure to be optimized is multilevel material that attains an apparent Poisson ratio of $-0.5$. An augmented Lagrangian was used to enforce the volume constraints for this example as well. The Lagrange multiplier was updated the same way as before, as was the penalty parameter $\beta$. The Young moduli of the four phases are set to $E^1=0.91$, $E^2=0.0001$, $E^3=1.82$, $E^4=0.0001$. The Poisson ratio of each material is set to $\nu=0.3$, however, this times we require that the volume constraints be set to $V^t_1=53\%$ and $V^t_3=7\%.$

\begin{table}[h]
\center
\begin{tabular}{c|ccc}
$ijkl$ & $1111$ & $1122$ & $2222$ \\
\hline
$\eta_{ijkl}$ & $1$ & $10$ & $1$ \\
$A^H_{ijkl}$ & $0.18$ & $-0.08$ & $0.18$ \\
$A^t_{ijkl}$ & $0.2$ & $-0.1$ & $0.2$ 
\end{tabular}
\caption{Values of weights, final homogenized coefficients and target coefficients}
\end{table}

\begin{figure}[h]
\centering
\begin{tabular}{cc}
\subf{\includegraphics[width=60.3mm]{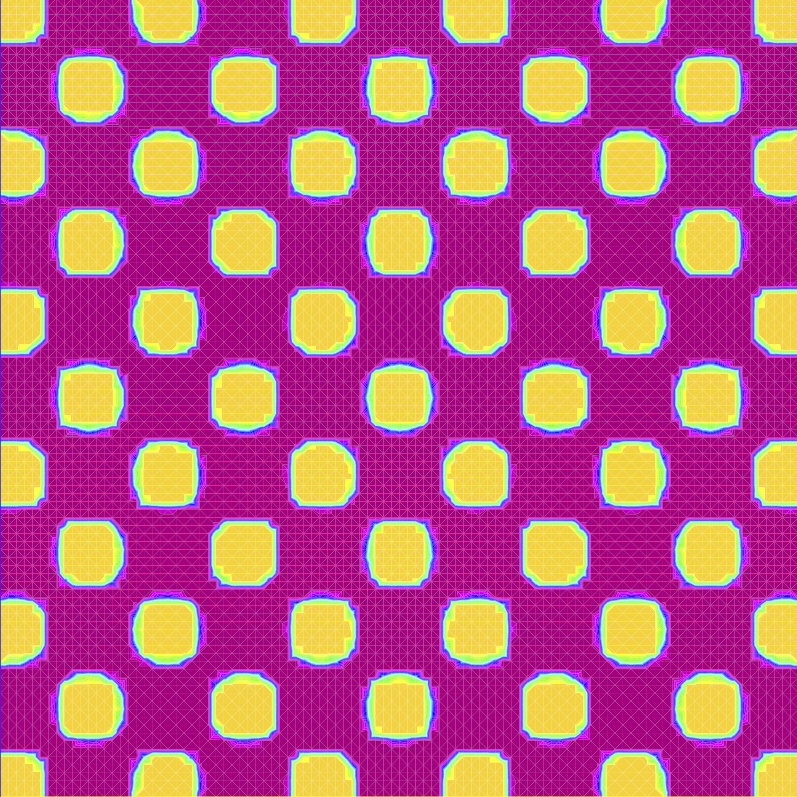}}
     {Initial shape}
&
\subf{\includegraphics[width=60.3mm]{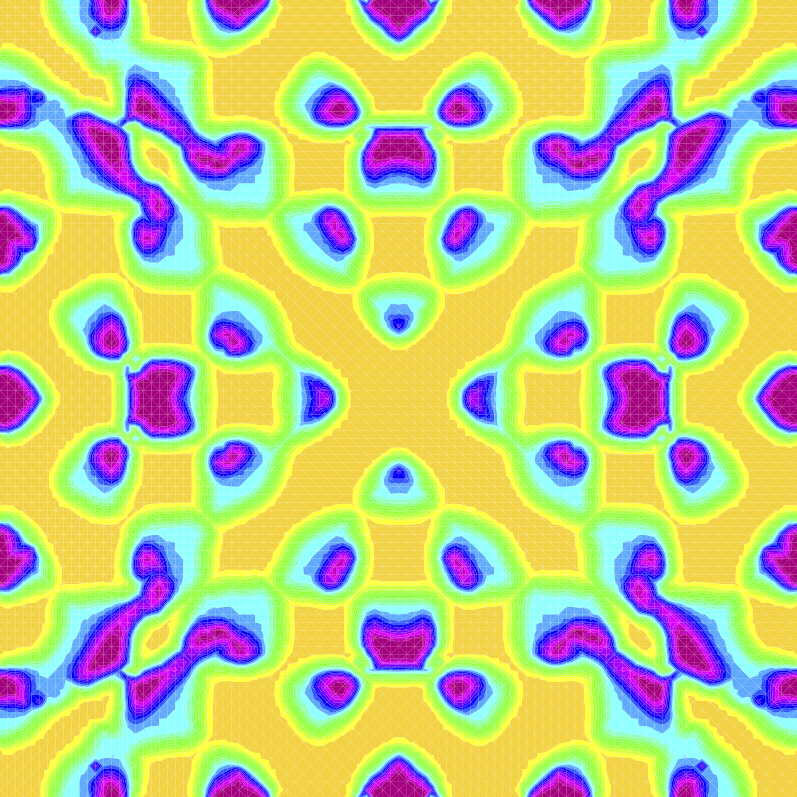}}
     {iteration $5$}
\\
\subf{\includegraphics[width=60.3mm]{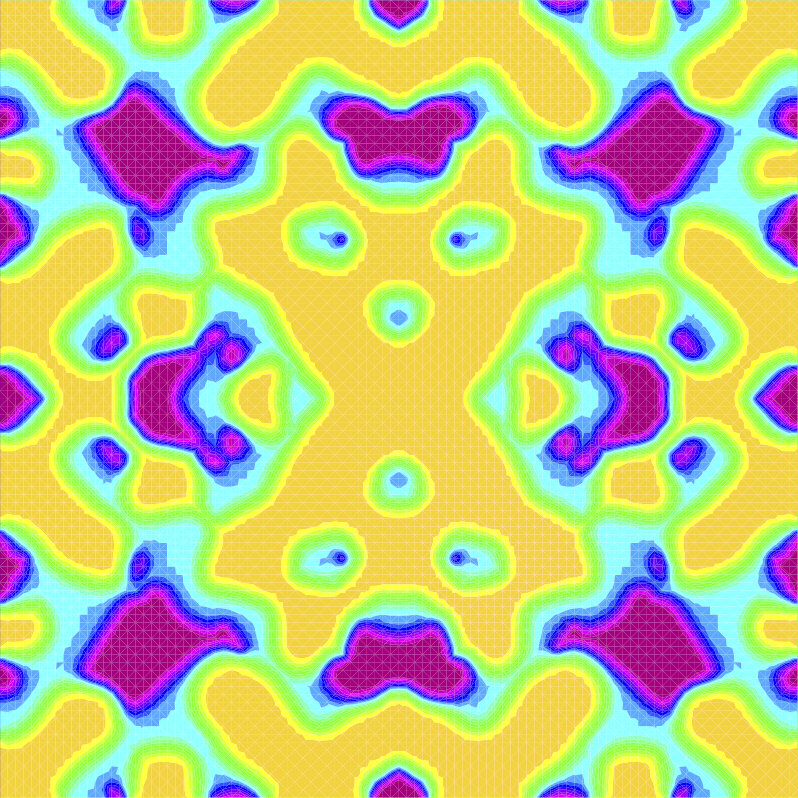}}
     {iteration $10$}
&
\subf{\includegraphics[width=60.3mm]{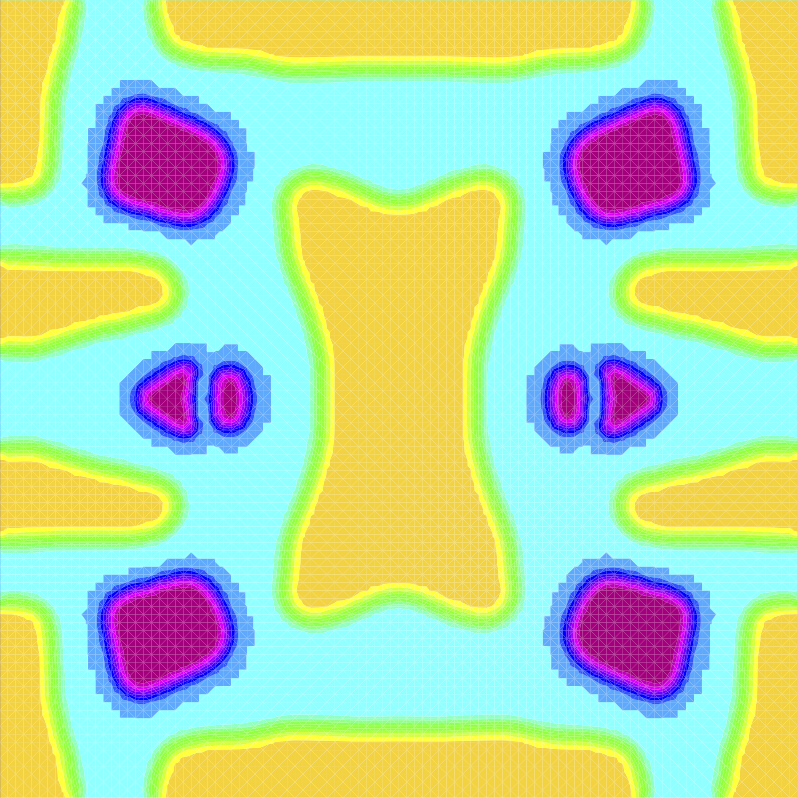}}
     {iteration $50$}
\\
\subf{\includegraphics[width=60.3mm]{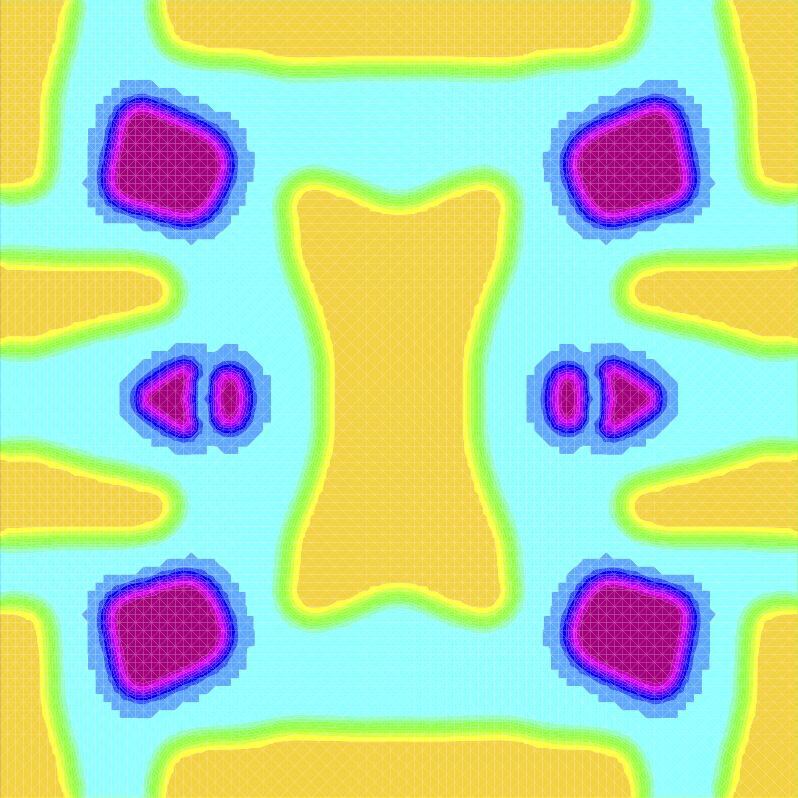}}
     {iteration $100$}
&
\subf{\includegraphics[width=60.3mm]{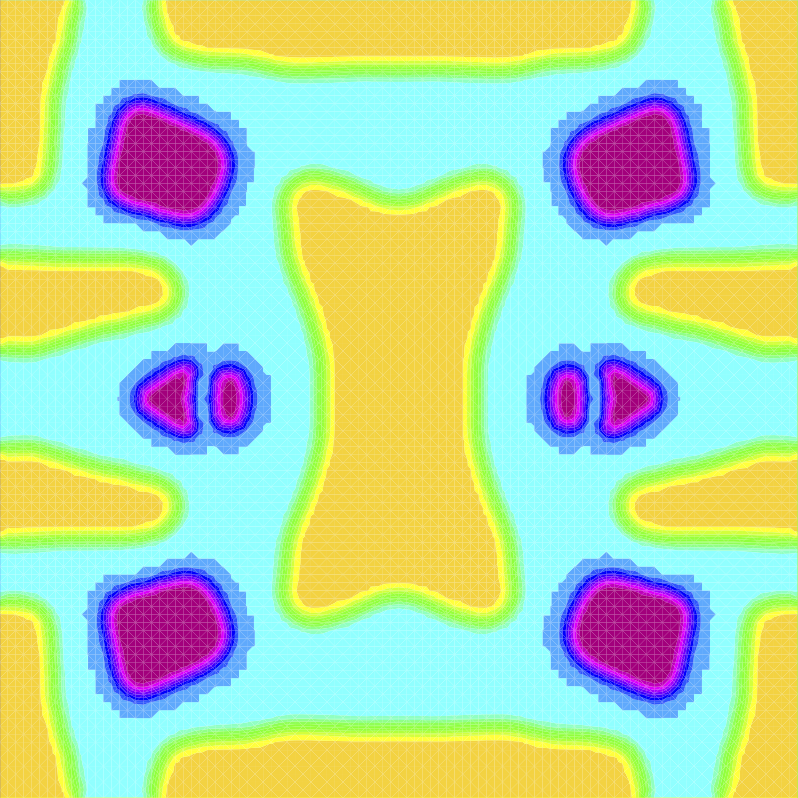}}
     {iteration $200$}
\end{tabular}
\caption{The design process of the material at different iteration steps. \break \protect \newboxsymbol{magenta}{magenta} Young modulus of $1.82$,  \protect \newboxsymbol{cyan}{cyan} Young modulus of $0.91$, \protect \newboxsymbol{yellow}{yellow} void.}
\end{figure}

\begin{figure}[h]
\centering
\begin{tabular}{cc}
\subf{\includegraphics[width=55mm]{M1_200}}
{}
&
\subf{\includegraphics[width=55mm]{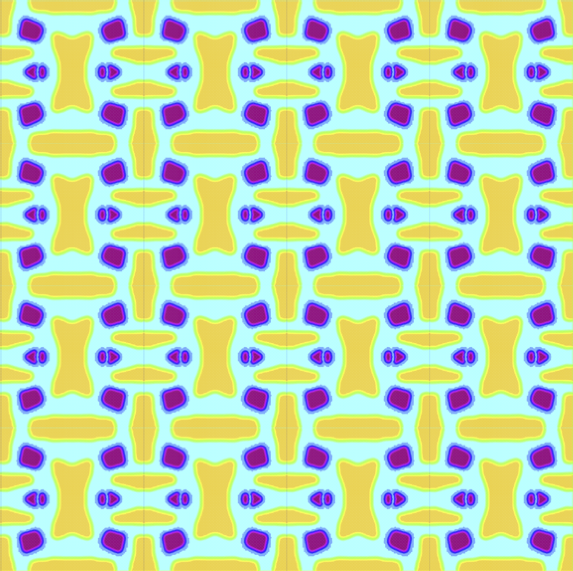}}
{}
\end{tabular}
\caption{On the left we have the unit cell and on the right we have the macro-structure obtained by periodic assembly of the material with apparent Poisson ratio $-0.5$.}
\end{figure}

\begin{figure}[h]
\centering
\begin{tabular}{cc}
\subf{\includegraphics[width=55mm]{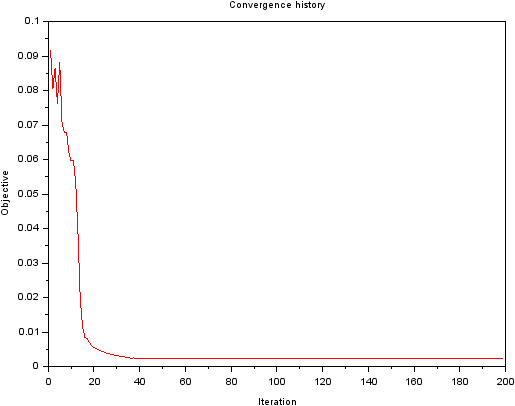}}
     {Evolution of the values of the objective }
&
\subf{\includegraphics[width=55mm]{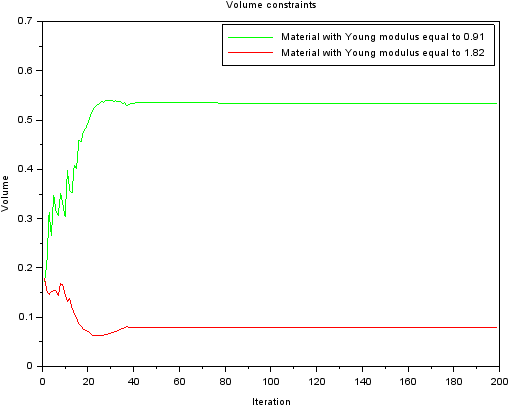}}
     {Evolution of the volume constraints}
\end{tabular}
\caption{Convergence history of objective function and the volume constraints.}
\end{figure}



\section{Conclusions and Discussion}
The problem of an optimal multi-layer micro-structure is considered. We use inverse homogenization, the Hadamard shape derivative and a level set method to track boundary changes, within the context of the smooth interface, in the periodic unit cell. We produce several examples of auxetic micro-structures with different volume constraints as well as different ways of enforcing the aforementioned constraints. The multi-layer interpretation suggests a particular way on how to approach the subject of 3D printing the micro-structures. The magenta material is essentially the cyan material layered twice producing a small extrusion with the process repeated several times. This multi-layer approach has the added benefit that some of the contact among the material parts is eliminated, thus allowing the structure to be further compressed than if the material was in the same plane.
 
The algorithm used does not allow ``nucleations'' (see \cite{AJT}, \cite{WMW04}). Moreover, due to the non-uniques of the design, the numerical result depend on the initial guess. Furthermore, volume constraints also play a role as to the final form of the design.  

The results in this work are in the process of being physically realized and tested both for polymer and metal structures. The additive manufacturing itself introduces further constraints into the design process which need to be accounted for in the algorithm if one wishes to produce composite structures.  

\section*{Acknowledgments}
This research was initiated during the sabbatical stay of A.C. in the group of Prof. Chiara Daraio at ETH, under the mobility grant  DGA-ERE (2015 60 0009). Funding for this research was provided by the grant \textit{''MechNanoTruss"}, Agence National pour la Recherche, France (ANR-15-CE29-0024-01). The authors would like to thank the group of Prof. Chiara Daraio for the fruitful discussions. The authors are indebted to Gr\'egoire Allaire and Georgios Michailidis for their help and fruitful discussions as well as to Pierre Rousseau who printed and tested the material in {\sc figure} \ref{fig:Clauu} \& {\sc figure \ref{fig:Rou}}.

\bigskip 

\bibliographystyle{amsplain}

\end{document}